\newcommand{\po}[3][R]{P^{#1}_{#2}(#3)}
\def\cA{\mathcal A}
\def\cB{\mathcal B}
\def\a{\mathfrak{a}}
\def\m{\mathfrak{m}}
\def\kz{\operatorname{K}}
\def\hz{\operatorname{H}}
\newcommand{\ov}{\overline}
\newcommand{\wt}{\widetilde}
\newcommand{\ges}{{\scriptscriptstyle\geqslant}}
\newcommand{\col}{\colon}
\newcommand{\dd}{\partial}
\newcommand{\del}{\partial}
\newcommand{\fm}{{\mathfrak m}}
\newcommand{\fn}{{\mathfrak n}}
\newcommand{\bd}{\boldsymbol}
\newcommand{\gr}{\operatorname{gr}}
\newcommand{\rank}{\operatorname{rank}}
\newcommand{\HH}{\operatorname{H}}
\newcommand{\hh}{\operatorname{H}}
\newcommand{\Tor}{\operatorname{Tor}}
\newcommand{\Ext}{\operatorname{Ext}}
\newcommand{\Po}{\operatorname{P}}
\renewcommand{\>}{\rangle}
\def\A{\mathcal A}
\def\B{\mathcal B}
\numberwithin{equation}{section}
\theoremstyle{plain}
\newtheorem{theorem}{Theorem}[section]
\newtheorem*{Lemma}{Lemma}
\newtheorem*{Proposition}{Proposition}
\newtheorem*{Main Theorem}{Main Theorem}
\newtheorem{proposition}[theorem]{Proposition}
\newtheorem{lemma}[theorem]{Lemma}
\newtheorem{corollary}[theorem]{Corollary}
\theoremstyle{definition}
\newtheorem{chunk}[theorem]{}
\newtheorem{remark}[theorem]{Remark}
\theoremstyle{remark}
\newenvironment{bfchunk}{\begin{chunk}\textit}{\end{chunk}}
\newtheorem*{Claim1}{Claim 1}
\newtheorem*{Claim2}{Claim 2}
\numberwithin{equation}{theorem}
\title[Algebra structure of Koszul homology]{Detecting Koszulness and related homological properties\\from the algebra structure of Koszul homology}
\author[Croll]{Amanda Croll}
\address{Amanda Croll, Concordia University Irvine, 1530 Concordia, Irvine, CA 92612}
\email{amanda.croll@cui.edu}
\author[Dellaca]{Roger Dellaca} 
\address{Roger Dellaca, Department of Mathematics, University of California, Irvine, 340 Rowland Hall, Irvine CA 92697}
\email{rdellaca@uci.edu}
\author[Gupta]{Anjan Gupta}
\address{Anjan Gupta, Dipartimento di Matematica,
Universit\`{a} degli studi di Genova, Via Dodecaneso 35, 16146, Genova}
\email{agmath@gmail.com}
\author[Hoffmeier]{Justin Hoffmeier}
\address{Justin Hoffmeier, Department of Mathematics and Statistics, Northwest Missouri State University, 800 University Drive, Maryville, MO 64468}
\email{jhoff@nwmissouri.edu}
\author[Mukundan]{Vivek Mukundan} 
\address{Vivek Mukundan, 
Purdue University; current address: School of Mathematics, Tata institute of fundamental research, Mumbai, India
}
\email{vmukunda@purdue.edu}
\author[Rangel Tracy]{Denise Rangel Tracy}
\address{Denise Rangel Tracy, Department of Mathematics, Manhattan College, 4513 Manhattan College Parkway, Riverdale, NY 10471 }
\email{drangeltracy01@manhattan.edu}
\author[\c{S}ega]{Liana M.~\c{S}ega}
\address{Liana M.~\c{S}ega\\ Department of Mathematics and Statistics\\
   University of Missouri\\ \linebreak Kansas City\\ MO 64110\\ U.S.A.}
     \email{segal@umkc.edu}
\author[Sosa]{Gabriel Sosa}
\address{Gabriel Sosa, Department of Mathematics and Statistics, Amherst College, Amherst, MA}
\email{gsosa@amherst.edu}
\author[Thompson]{Peder Thompson}
\address{Peder Thompson\\ Department of Mathematics and Statistics\\ Texas Tech University\\ Broadway and Boston\\ Lubbock, TX 79409}
\email{peder.thompson@ttu.edu}
\date{\today}	
\subjclass[2010]{13D07, 13D02, 16S37}
\thanks{This material is based upon work supported by the National Science Foundation under Grant \#1321794, as part of the  Mathematical  Research Communities 2015 program in Snowbird, Utah, and by a grant from the Simons Foundation (\# 354594, Liana \c Sega). The third author was supported by the Istituto Nazionale di Alta Matematica ``Francesco Severi" fellowship.} 
\thanks{{\em Key words and phrases:}
Koszul homology, Golod homomorphism, Koszul algebra}
\begin{document}
\maketitle
\begin{abstract}
Let $k$ be a field and $R$ a standard graded $k$-algebra. We denote by $\hz^R$ the homology algebra of the Koszul complex on a minimal set of generators of the irrelevant ideal of $R$.  We discuss the relationship between the multiplicative structure of $\hz^R$ and the property that $R$ is a Koszul algebra. More generally, we work in the setting  of local rings and we show that certain conditions on the multiplicative structure of Koszul homology imply strong homological properties, such as existence of certain Golod homomorphisms, leading to explicit computations of Poincar\'e series. As an application, we show that the Poincar\'e series of all finitely generated modules over a stretched Cohen-Macaulay local ring are rational, sharing a common denominator.
\end{abstract}

\section*{Introduction}

Let $(R,\fm,k)$ denote a  local Noetherian ring $R$ with maximal ideal $\fm$ and residue field $k$.  Let $\kz^R$ denote the Koszul complex on a minimal generating set of $\fm$, and let $\hh^R$ denote its homology. The Koszul complex $\kz^R$ can be endowed with the structure of a differential graded (DG) algebra, and is the first step in constructing a DG algebra minimal free resolution of $k$ over $R$ (called a {\it Tate resolution} of $k$ over $R$) through the process of adjoining DG algebra variables. It is thus natural to expect that the properties of the homology algebra $\hh^R$ are related to other homological properties of $R$.  Indeed, it is known that both the Gorenstein and complete intersection properties of $R$ can be characterized in terms of $\hh^R$. 

Certain higher order homology operations on Koszul homology, introduced by Golod \cite{Golod}, can be used to characterize extremality in the growth of the minimal free resolution of $k$ over $R$. If the ring $R$ is Golod, then it has the property that for all finitely generated $R$-modules $M$ the Poincar\'e series $\sum_{i\ge 0}\rank_k(\Tor_i^R(M,k))z^i$ are rational and share a common denominator, see  \cite{GG75}. This property is also satisfied by other large classes of rings, and the recent papers \cite{RS} and \cite{KSV} provide insight into the fact that the multiplicative structure of Koszul homology plays a role in establishing such results. In this paper we further explore how the structure of $\hh^R$ can be used to derive rationality of Poincar\'e series and other homological properties of $R$. In particular, we give special attention in the graded case to the Koszul property. 

 Recall that the Koszul homology of a Golod ring has trivial multiplication, see \cite{Golod}. When $R$ is not Golod, we find it useful to consider conditions on $\hh^R$ that, to some extent, generalize the condition that multiplication is trivial.  We require that cycles living ``deep" enough in $\kz^R$ (i.e. ones that are contained in $\fm^i\kz^R$ for large enough values of $i$)  can be expressed, up to a boundary, in terms of certain cycles that have trivial products among themselves. More precisely, we consider the following conditions on $\kz^R$, depending on integers $t, b, s$: 
\begin{itemize}
\item[$\mathcal Z_{t,b,s}$:] There exists a finite set $Z\subseteq Z(\fm^t\kz^R)$ such that $zz'=0$ for all $z, z'\in Z$ and for every $v \in \m^s\kz^R$ there exists $m\in \mathbb N$ and $z_i\in Z$, $u_i \in Z(\m^{b}\kz^R)$  for each $i$ with $1\le i\le m$, such that 
$v - \sum_{i=1}^m z_iu_i \in B(\m^{s-1}\kz^R)$. 
\end{itemize}
\begin{itemize}
\item[$\mathcal P_{t}$:] There exists $[l]\in \HH_1(\kz^R)$ such that for every $z\in Z(\fm^t\kz^R)$ there exists $z'\in Z(\fm^{t-1}\kz^R)$ such that $z-z'l\in B(\fm^{t-1}\kz^R)$.
\end{itemize}
After setting some ground work in the first two sections, in Sections \ref{Generation_by_special_set_section} and \ref{Generation_by_one_element_section} we prove various  homological implications of the conditions $\mathcal Z_{t,b,s}$ and $\mathcal P_{t}$, under specific conditions on the integers $t, b,s$.  The main results regarding these conditions are Theorems \ref{thm1} and \ref{thm2}. The conclusions of the theorems and their corollaries  are formulated in terms of vanishing of the natural maps $\Tor_*^R(\fm^j,k)\to\Tor_*^R(\fm^i,k)$ induced by the inclusions $\fm^j\subseteq \fm^i$ for certain values of $i$, $j$, as well as identifying Golod homomorphisms, establishing generation of the Yoneda algebra $\Ext_R(k,k)$  in low degreess, and deducing rationality of Poincar\'e series. Our arguments utilize the DG algebra structure of the minimal free resolution of $k$ over $R$. This approach is inspired by, and generalizes, work of Levin and Avramov \cite{GL}, where homological properties of local Gorenstein artinian rings are derived from the fact that the Koszul homology algebra of such a ring is a Poincar\'e algebra. 

For appropriate values of $t$, the property $\mathcal P_{t}$ holds for the class of  compressed Gorenstein artinian local rings discussed in \cite{RS} and also for the class of compressed level local artinian  rings of odd socle degree, see \cite{KSV}. In particular, our results in  Section \ref{Generation_by_one_element_section} can be used to recover the results of \cite{RS} and \cite{KSV} regarding the fact that when the socle degree is different than three, these rings can be obtained as homomorphic images of a hypersurface, via a Golod homomorphism. 

In Section \ref{stretched} we show that the property $\mathcal P_{2}$ is satisfied in the case of stretched artinian rings satisfying $\fm^3\ne 0$ and $\rank_k(\fm/\fm^2)\ne \rank_k(0\colon \fm)$. The class of stretched Cohen-Macaulay local rings  was considered by Sally in \cite{Sally80}, where she proves that the Poincar\'e series of the residue field over such a ring is rational. 
A consequence of our results on generation in the Koszul homology algebra is that the Poincar\'e series of all finitely generated $R$-modules over a stretched Cohen-Macaulay local ring $R$ are rational, sharing a common denominator. Theorem \ref{main-s} also states that the Yoneda algebra $\Ext_R(k,k)$ of a stretched artinian local ring $(R,\fm,k)$ is generated in degree $1$ if $\rank_k(\fm/\fm^2)\ne \rank_k(0\colon \fm)$ and in degrees $1$ and $2$ if  $\rank_k(\fm/\fm^2)= \rank_k(0\colon \fm)$. 

For the remainder of the introduction, assume that $R$ is a standard graded $k$-algebra. Let $\kz^R$ denote the Koszul complex on a set of minimal generators of the irrelevant ideal of $R$ and let $\hz^R$ denote the homology algebra of $\kz^R$. The $k$-algebra $R$ is said to be a {\it Koszul algebra} if the resolution of $k$ over $R$ is linear, that is to say, the differentials in the minimal graded free resolution of $k$ can be represented by matrices of linear forms (see e.g., \cite{S} and \cite{HI}). The algebra $\hz^R$  is bigraded; when writing the bidegree $(i,j)$ of an element, the index  $i$ denotes homological degree and the index $j$ denotes internal degree. The {\it linear strand} of $\hz^R$ is the set of elements  of bidegree $(i,i+1)$,  and the {\it nonlinear strands} are composed of elements of bidegree $(i,i+r)$ with $r>1$. We say that the nonlinear strands of $\hh^R$ are generated by a set $\ov Z\subseteq \hh^R$ if the nonlinear strands are contained in the ideal generated by $\ov Z$ in $\hh^R$. If the nonlinear strands are generated by a subset $\ov Z$ of the linear strand, it follows that $\hh^R$ is generated by the linear strand as a $k$-algebra. 

In Section \ref{graded} we interpret our earlier results in the graded setting, with special attention to the Koszul property. In particular, we obtain the following statements, which  provide new homological criteria for verifying that an algebra is Koszul:  
\begin{enumerate}
\item If the nonlinear strands of $\hz^R$ are generated by one element of bidegree $(1,2)$, then $R$ is absolutely Koszul, hence Koszul. (See \cite{IR} or  Section \ref{graded} regarding absolutely Koszul algebras.)
\item If $R_{\ges 3}=0$ and there exists a set of cycles $Z$ representing elements in the linear strand, with the property that $zz'=0$ for all $z, z'\in Z$ and such that the nonlinear strand of $\hh^R$ is generated by $\ov Z=\{[z]\mid z\in Z\}$, then $R$ is Koszul. 
\end{enumerate}

Recall that in \cite{ACI} and \cite{Boocher-others}, the authors make the point that if $R$ is Koszul, then part of the Koszul homology algebra $\hh^R$ is generated by elements in the linear strand. More precisely, in \cite[Theorem 4.1]{ACI} it is shown that if $R$ is Koszul then $\hh^R_{i,j}=0$ for $j>2i$ and $\hh^R_{i,2i}=(\hh^R_{1,2})^i$ for all $i\ge 0$ and in \cite[Theorem 3.1]{Boocher-others} it is proved that one has  also  $\hh^R_{i,2i-1}=(\hh^R_{1,2})^{i-2}\hh^R_{2,3}$ for all $i\ge 2$.  Section \ref{graded} and Section \ref{examples} provide some further insight into the connections between the fact that $R$ is Koszul and the structure of $\hh^R$. In \ref{MHK} we note that if  $R$ is Koszul then the nonlinear strands of $\hh^R$ are contained in the set of matric Massey products of $\kz^R$. However, generation of $\hz^R$ by the linear strand (which implies that the nonlinear strands of $\hh^R$ are contained in the set of matric Massey products) does not imply that $R$ is Koszul. This can be seen by means of  the example in \ref{ring_not_Koszul}, which relies on a ring from a paper of Roos \cite{Roo16} (The  fact that  the linear strand need not generate $\hz^R$ as a $k$-algebra when $R$ is Koszul is also noted in \cite[Remark 3.2]{Boocher-others}.)  On the other hand, \ref{ring_gen_by_set} describes a Koszul algebra $R$ for which $\hh^R$ has the same bigraded Hilbert series as the homology algebra of the ring of \ref{ring_not_Koszul} and  is also generated  by the linear strand. It turns out that the ring in \ref{ring_gen_by_set}  satisfies the hypothesis of statement (2) above. This observation sheds some light on our effort to understand what distinguishes one homology algebra from the other in the two examples. 

The examples in Section \ref{examples} utilize the Macaulay2 package \texttt{DGAlgebras} written by Frank Moore, which provides an efficient way to verify rings for which statements (1) or (2) hold; using this, we apply our Theorem \ref{graded-thm} to the rings studied in Roos \cite{Roo16}. The last section also contains a concrete example of how our results can be used towards establishing homological properties of the ring and computations of Poincar\'e series, see \ref{4socle}.

Given the evidence that the properties considered in this paper show up in a large variety of situations, we hope that this study will be useful in further explorations of homological properties of local rings. \\

\noindent
{\bf Acknowledgments:}
We are grateful to Jan-Erik Roos and Aldo Conca for useful comments and suggestions.  
 
The work on this paper started during the 2015 Mathematical Research Communities program in Commutative Algebra, under the guidance of Liana \c{S}ega.  The authors would also like to thank the other organizers of this program, Srikanth Iyengar, Karl Schwede, Gregory Smith, and Wenliang Zhang, for their support, and also the AMS staff that coordinated the program. The third author joined the project following conversations during the conference in honor of Craig Huenke held in Ann Arbor in July 2016,  and is thankful for support to travel to this conference from the Department of Mathematics at the University of Michigan and IIT Bombay.

\section{Background}
\label{background_section}
In this section we set notation and provide needed definitions. We recall the definition of a  small homomorphism and provide  some preliminary results centered on this concept. 

\begin{chunk}\label{prelim}
Let $(R,\fm,k)$ be a local ring and $M$ a finite (meaning finitely generated) $R$-module. Fix a minimal generating set of $\fm$ and let $\kz^R$ denote the Koszul complex on this set. Let $\hz^R$ denote the homology algebra of $\kz^R$.  The complex $\kz^R$ has a natural structure of a graded commutative algebra, and this structure is inherited by $\hz^R$.  We denote by $\kz^M$ the Koszul complex $\kz^R\otimes_RM$. 

 The {\it Poincar\'e series} $\Po_M^R(z)$ of $M$ is defined as $$\Po^R_M(z)=\sum_{i\ge 0}\rank_k(\Tor_i^R(M,k))z^i.$$

If $\phi: (R,\fm,k)\to(S,\fn,k)$ is a surjective homomorphism of local rings then the following coefficientwise inequality holds $$\Po_M^S(z)\preccurlyeq\frac{\Po_M^R(z)}{1-z(\Po_S^R(z)-1)}\,.$$ If equality holds for $M=k$ then we say that $\phi$ is a {\it Golod homomorphism}.

The homomorphism $\phi$ induces maps $$\Ext_{\phi}^i(k,k):\Ext_S^i(k,k)\to\Ext_R^i(k,k).$$ If $\Ext_{\phi}^*(k,k)$ is surjective then we say $\phi$ is {\it small}. Recall that if $\phi$ is Golod then $\phi$ is small (cf.\! Avramov \cite[3.5]{Av}). 
\end{chunk}

When $R$ is artinian, specific conditions formulated in terms of the concepts above allow for an explicit computation of the series $\Po^R_k(z)$. 

\begin{lemma}\label{lem_poincare_rational}
Let $(R,\fm,k)$ be an artinian local ring with $\fm^{s+1}=0$. Let $n$ denote the minimal number of generators of $\fm$ and let $a$ denote the dimension of $\fm^s$.  

If the canonical projection $R\to R/\fm^s$ is small and the ring $R/\fm^s$ is Golod, then the Poincar\'e series of $k$ over $R$ is rational, satisfying the formula: 
\begin{equation}\label{Golod-form}
\Po^R_k(z)=\frac{(1+z)^n}{1-z(\hh^{R/\fm^s}(z)-1)+az^2(1+z)^n}
\end{equation}
where $\hh^{R/\fm^s}(z)$ stands for the Hilbert series (which is in this case a polynomial of degree $n$) of the Koszul homology algebra $\hh^{R/\fm^s}$. 
\end{lemma}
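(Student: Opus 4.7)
The plan is to assemble three ingredients: the Golod formula for $\Po_k^S(z)$ with $S=R/\fm^s$, a computation of $\Po_S^R(z)$ from the socle hypothesis, and the Golod homomorphism identity for $\pi$ itself.

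First, since $S=R/\fm^s$ is Golod, Golod's theorem yields
\[
\Po_k^S(z) \ =\ \frac{(1+z)^n}{1-z(\hh^S(z)-1)},
\]
where the number of minimal generators of $\fm_S=\fm/\fm^s$ is still $n$ because we may assume $s\ge 2$ (the case $s=1$ is trivial, since then smallness forces $R=k$). Second, the assumption $\fm^{s+1}=0$ gives $\fm\cdot\fm^s=0$, so $I:=\fm^s$ is a $k$-vector space of dimension $a$, isomorphic to $k^a$ as an $R$-module. Thus $\Tor_i^R(I,k)\cong\Tor_i^R(k,k)^{\oplus a}$, and the short exact sequence $0\to I\to R\to S\to 0$ yields $\Po_S^R(z)=1+az\Po_k^R(z)$.

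Third, the coefficient-wise inequality from \ref{prelim}, combined with the preceding formula for $\Po_S^R(z)$, gives the lower bound $\Po_k^R(z)\succcurlyeq \Po_k^S(z)/(1+az^2\Po_k^S(z))$, and the crux of the proof is to upgrade this to an equality by showing that $\pi$ is in fact Golod. The plan is to leverage smallness of $\pi$ combined with $\ker\pi\subseteq\Soc(R)$: the minimal DG-algebra resolution of $k$ over $R$ can be built from the one over $S$ by adjoining Tate-style variables killing the cycles produced by the socle elements of $I$, and smallness together with the Golod property of $S$ should force this construction to be minimal, i.e., $\pi$ Golod. Granted this, $\Po_k^R(z)=\Po_k^S(z)/(1+az^2\Po_k^S(z))$; substituting the formula for $\Po_k^S(z)$ from the first step and using
\[
\frac{1}{\Po_k^S(z)}+az^2 \ =\ \frac{1-z(\hh^S(z)-1)+az^2(1+z)^n}{(1+z)^n}
\]
then inverts to the claimed expression for $\Po_k^R(z)$.

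The main obstacle is justifying that smallness, together with $\ker\pi\subseteq\Soc(R)$ and the Golod property of $S$, forces $\pi$ to be a Golod homomorphism. This should follow from the DG-algebra and Massey-product characterization of Golod homomorphisms, drawing on results of Avramov cited at \ref{prelim}; in general smallness is strictly weaker than Golodness, so the socle hypothesis on $\ker\pi$ must play an essential role in closing the gap.
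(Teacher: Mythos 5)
Your first two ingredients match the paper exactly: you use the Golod formula for $\Po_k^S(z)$ with $S=R/\fm^s$, and you derive $\Po_S^R(z)=1+az\Po_k^R(z)$ from $\fm\cdot\fm^s=0$ via $\fm^s\cong k^a$ (the paper does the same). The algebraic manipulation at the end is also the same as the paper's.

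However, the crux of the argument — upgrading smallness of $\pi\colon R\to R/\fm^s$ to the statement that $\pi$ is a Golod homomorphism — is left as an acknowledged gap in your write-up, and the strategy you sketch for closing it is off-target. You speculate that the socle hypothesis $\ker\pi\subseteq\Soc(R)$ ``must play an essential role'' here, and you propose a direct construction via adjunction of Tate variables. In fact the step the paper uses is a black-box theorem: \emph{if $\phi\colon R\to S$ is a small surjection of local rings and $S$ is a Golod ring, then $\phi$ is a Golod homomorphism} (cited as \cite[6.7]{S} in the paper). That theorem does \emph{not} require any socle condition on the kernel; the socle hypothesis in this lemma is only used to compute $\Po_{R/\fm^s}^R(z)$ and to guarantee $\edim(R/\fm^s)=\edim R$. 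Your sketch also has the Tate adjunction running in the wrong direction (one builds a resolution over $S$ from one over $R$, not the reverse when passing through a quotient), and in general the relationship between smallness, trivial Massey operations, and Golodness is subtle enough that ``this should follow'' from the Massey-product characterization is not a proof. You should either cite the small-plus-Golod-target result directly, or reproduce its proof; the remainder of your argument is then correct and agrees with the paper.
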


\begin{proof}
As $\fm^s\cong k^a$, we have  $\Po^R_{R/\fm^s}(z)=az \Po_k^R(z)+1$. 

Since $R/\fm^s$ is Golod and $R\rightarrow R/\fm^s$ is small, $R\rightarrow R/\fm^s$ is a Golod homomorphism by \cite[6.7]{S}. Therefore, we have 
$$
\Po_k^R(z)=\Po_k^{R/\fm^s}(z)\left(1-z(\Po^R_{R/\fm^s}(z)-1)\right)=\Po_k^{R/\fm^s}(z)\left(1-az^2\Po_k^R(z)\right)\,.
$$
By rearranging, we have that
\begin{equation}\label{2background}
\Po_k^R(z)=\frac{\Po_k^{R/\fm^s}(z)}{1+az^2\Po_k^{R/\fm^s}(z)}\,.
\end{equation}

Finally, since $R/\fm^s$ is Golod, we have that
\begin{equation}\label{3background}
\Po_k^{R/\fm^s}(z)=\frac{(1+z)^n}{1-z(\hh^{R/\fm^s}(z)-1)}\,.
\end{equation}
 The conclusion follows from \eqref{2background} and \eqref{3background}. 
\end{proof}

In order to apply the lemma, we need to verify that the canonical projection $R\to R/\fm^s$ is Golod or small. In \cite{GL}, Levin and Avramov prove that this homomorphism is Golod (thus small) whenever the artinian ring $R$ is Gorenstein. Their proof relies on the fact that $\hh^R$ is a Poincar\'e algebra when $R$ is Gorenstein.  This leads us to believe that, more generally, the structure of the  Koszul homology $\HH^R$ can be used to understand the homological properties of the canonical projection $R\to R/\fm^s$. The next lemma shows that matric Massey products play a role. 

\begin{chunk}\label{r}
Let $\varphi\colon R\to S$ be a homomorphism of local rings.  The {\it usual products} of $\HH^R$ are understood to be the set of products $\HH_{\ges 1}^R\cdot \HH_{\ges 1}^R$. We denote by $MH(\kz^R)$ the set of {\it matric Massey products} of $\HH_{\ges 1}^R$, as defined in \cite{May}. This set of higher order homology operations is a submodule of $\HH_{\ges 1}^R$ and contains the usual products; see also \cite[(1.4.1)]{Avr86} for a more concise definition. 

The induced map $\HH(\kz^{\varphi})\colon \HH(\kz^R)\to \HH(\kz^S)$ satisfies $\HH(\kz^{\varphi})(MH(\kz^R))\subseteq MH(\kz^S)$. By \cite[4.6]{Av}, if $\varphi$ is small then the induced homomorphism
$$
\HH_{\ge 1}(\kz^R)/MH(\kz^R)\to \HH_{\ge 1}(\kz^S)/MH(\kz^S)
$$
is injective. From here we derive immediately the following statement: 

\begin{Lemma}
Let $(R,\fm,k)$ be a local ring and let $i\ge 0$. Consider the conditions: 
\begin{enumerate}[\quad\rm(1)]
\item The canonical projection $R\to R/\fm^i$ is small; 
\item $\HH_{\ge 1}(\fm^i \kz^R)\subseteq MH(\kz^R)$.
\end{enumerate}
Then {\rm (1)} implies {\rm (2)}. \qed
\end{Lemma}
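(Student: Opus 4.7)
The plan is to read the inclusion off from Avramov's injectivity theorem recalled in \ref{r}, applied to the canonical surjection $\varphi\colon R \to S := R/\fm^i$. Assume $i \ge 2$; the cases $i = 0, 1$ will be disposed of at the end.

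First, I would fix the minimal generating set $f_1,\dots,f_n$ of $\fm$ on which $\kz^R$ is built. For $i \ge 2$, the images $\bar f_1,\dots,\bar f_n$ minimally generate $\fn := \fm S$ (since $\fn/\fn^2 = \fm/\fm^2$), so the minimal Koszul complex $\kz^S$ is identified with $\kz^R \otimes_R S = \kz^R/\fm^i \kz^R$. This yields a short exact sequence of DG modules
\[
0 \to \fm^i \kz^R \to \kz^R \xrightarrow{\kz^\varphi} \kz^S \to 0,
\]
whose long exact sequence in homology shows that the image of $\HH_{\ge 1}(\fm^i \kz^R) \to \HH_{\ge 1}(\kz^R)$ lies in the kernel of $\HH(\kz^\varphi)$.

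The second step is a one-line diagram chase. Since $\varphi$ is small, the recalled result in \ref{r} guarantees that
\[
\HH_{\ge 1}(\kz^R)/MH(\kz^R) \to \HH_{\ge 1}(\kz^S)/MH(\kz^S)
\]
is injective. Any class $\alpha$ coming from $\HH_{\ge 1}(\fm^i \kz^R)$ is killed by $\HH(\kz^\varphi)$, so its residue modulo $MH(\kz^R)$ must also vanish by injectivity, forcing $\alpha \in MH(\kz^R)$. The case $i = 1$ is handled identically after observing that $\kz^S = \kz^k$ is concentrated in degree $0$, so $\HH_{\ge 1}(\kz^\varphi)$ is the zero map; the case $i = 0$ is vacuous.

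The main obstacle is clerical rather than mathematical: one has to be careful that the minimal Koszul complex of $S$ really is the quotient $\kz^R/\fm^i \kz^R$, which uses $i \ge 2$ so that $\fm/\fm^i \twoheadrightarrow \fm/\fm^2$ keeps the chosen generators of $\fn$ minimal. Everything else is just chasing the long exact sequence and invoking Avramov's theorem.
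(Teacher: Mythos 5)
Your argument is correct and is exactly the one the paper has in mind: factor through the short exact sequence $0 \to \fm^i\kz^R \to \kz^R \to \kz^S \to 0$ (identifying $\kz^S = \kz^R/\fm^i\kz^R$ for $i \ge 2$), note that the image of $\HH_{\ge 1}(\fm^i\kz^R)$ lands in $\ker \HH(\kz^\varphi)$, and then apply the injectivity of $\HH_{\ge 1}(\kz^R)/MH(\kz^R) \to \HH_{\ge 1}(\kz^S)/MH(\kz^S)$ from Avramov \cite[4.6]{Av}. Your extra care about $i=0,1$ is fine (for $i=1$, smallness of $R\to k$ forces $R=k$, so the statement is vacuous), and is the only point the paper's terse "derive immediately" glosses over.
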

\end{chunk}

Example \ref{ring_not_Koszul} in Section \ref{examples} shows that the implication (2)$\implies$(1) does not hold when $i=2$. Ideally, one would like to replace condition (2) with a stronger one, that is equivalent to (1). While such a condition is not yet known, we identify in Sections \ref{Generation_by_special_set_section} and \ref{Generation_by_one_element_section} two conditions on Koszul homology that imply (1), for certain values of $i$.

\section{A property of the Tate resolution}
\label{tate_resolution_section}
The purpose of this section is to record in Proposition \ref{tate-prop} a general property of the Tate resolution. This result will be used later, in the proof of one of the main theorems. We start with a description of the Tate resolution, and we invite the reader to consult \cite{Avr98} for more details, in particular for the definition of a DG algebra. We then build the  ingredients of the proof of the proposition by means of a couple of lemmas.  

\begin{bfchunk}{Adjunction of variables.}
Let $B$ be a DG algebra over $R$ and suppose $z$ is a cycle in $B$.  We embed $B$ into a DG algebra $B'=B\<y\>$ by freely adjoining a variable $y$ such that $\dd(y)=z$ as follows: 

If $|z|$ is even, the variable $y$ such that $\dd(y)=z$ is called an {\em exterior variable} and satisfies $y^2=0$. Denote by $k\<y\>$ the exterior algebra over $k$ of a free $k$-module on a generator of degree $|z|+1$.  The differential on $B\<y\>=B\otimes_k k\<y\>$  is given by
$$\dd(b_0+b_1y)=\dd(b_0)+\dd(b_1)y+(-1)^{|b_1|}b_1z.$$

 If $|z|>0$ is odd, $y$ is a {\em divided powers variable}.  The $k$-algebra $k\<y\>$ on a divided powers variable $y$ is the free $k$-module with basis $\{y^{(i)}:|y^{(i)}|=i|y|\}_{i\geq 0}$ and multiplication table
$$y^{(i)}y^{(j)}={i+j \choose i} y^{(i+j)}\text{, for $i,j\geq 0$.}$$
We set $y^{(1)}=y$, $y^{(0)}=1$, and $y^{(i)}=0$ for $i<0$.  Forgetting the differentials, $B\<y\>=B\otimes_k k\<y\>$.  If $z\in B$ is a cycle of positive odd degree, then
$$\dd\left(\sum_i b_i y^{(i)}\right)=\sum_i \dd(b_i)y^{(i)}+\sum_i (-1)^{|b_i|}b_izy^{(i-1)}$$
is a differential on $B\<y\>$ that extends that of $B$ and satisfies the Leibniz rule.

The notation $B\<y_1, \dots, y_n\>$ stands for the DG algebra obtained by repeated adjunction of variables as above. 
\end{bfchunk}

\begin{bfchunk}{The Tate resolution.}\label{tate_res}
Let $x_1,...,x_n$ be a minimal generating set for $\m$ and $\kz^R$ the Koszul complex on $x_1,...,x_n$.  Note that we can interpret $\kz^R$ as the DG algebra
$$\kz^R=R\<T_1,...,T_n\>,$$
where $T_i$ are degree 1 exterior variables (these exterior variables in degree 1 will be referred to as {\em Koszul variables}) with $\dd(T_i)=x_i$.  One can continue to ``kill" homology by adjoining variables to $\kz^R$,  following the construction in \cite[6.3.1]{Avr98}. The resulting DG algebra $\mathcal A$ is a minimal free resolution of $k$ over $R$,  often referred to as the {\it Tate resolution} of $k$ over $R$. 

Forgetting differentials, $\A$ is a free $R$-module: see  \cite[Remark 6.2.1]{Avr98} for a description of the basis in terms of the variables adjoined. In particular, one can see that $\A$ is also a free $\kz^R$-module. 
\end{bfchunk}

\begin{bfchunk}{Notation.}
We write $\kz$ for $\kz^R$ when the ring $R$ is understood.
Since  $\cA$ is a free algebra over $\kz$, we consider a homogeneous $\kz$-basis of $\cA$. For each $j$, let $\chi_{i,j}$ with $1\le i\le q_j$  denote the elements of homological degree $j$ in this basis. If $z\in \cA_p$, then we write $z$ in terms of this basis as 
\begin{equation}
\label{basis}
z=\sum_{j=0}^p\sum_{i=1}^{q_j}z_{i,j}\chi_{i,p-j}
\end{equation}
with $z_{i,j}\in \kz_{j}$ for each $j$. 

As usual, $Z(\cA)$ denotes the set of cycles of $\cA$ and $B(\cA)$ denotes  the set of boundaries. 
\end{bfchunk}

The following lemma provides the inductive step for our key lemma, Lemma \ref{z}, below. 
\begin{lemma}
\label{induction-step}
Let  $z\in Z_p(\cA)$ and write it as in \eqref{basis}. Let $a$ be an integer with $0\le a\le p$.  
Assume  $z_{i,j}\in \fm^t \kz$ for all $j$ with $0\le j\le a-1$ and all $i$ with $1\le i\le q_j$. 
Then  
$$\dd(z_{i,a})\in \fm^{t+1}\kz$$
for all $i$ with $1\le i\le q_a\,.$
\end{lemma}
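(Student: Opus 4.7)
The plan is to apply the Leibniz rule to $\dd(z)=0$, expand everything against the $\kz$-basis $\{\chi_{l,m}\}$ of $\cA$, and then read off a single coefficient — specifically, the coefficient of $\chi_{l,p-a}$ — to isolate $\dd(z_{l,a})$. The minimality of the Tate resolution will supply the extra factor of $\fm$ that upgrades the hypothesis $z_{i,j}\in\fm^t\kz$ (for $j\le a-1$) to $\dd(z_{l,a})\in\fm^{t+1}\kz$.

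In detail: applying $\dd$ to \eqref{basis} and using the Leibniz rule gives
$$
0=\dd(z)=\sum_{j=0}^{p}\sum_{i=1}^{q_j}\dd(z_{i,j})\,\chi_{i,p-j}+\sum_{j=0}^{p}\sum_{i=1}^{q_j}(-1)^{j}z_{i,j}\,\dd(\chi_{i,p-j}).
$$
Since $\cA$ is free over $\kz$ on the basis $\{\chi_{l,m}\}$, each element of $\cA$ has a unique $\kz$-linear expansion against this basis; I would next expand
$$
\dd(\chi_{i,p-j})=\sum_{m<p-j}\sum_{l}\gamma^{i,j}_{l,m}\,\chi_{l,m}
\quad\text{with}\quad\gamma^{i,j}_{l,m}\in\kz_{p-j-1-m}.
$$

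The crucial input now is minimality: because $\cA$ is a minimal free $R$-resolution of $k$, we have $\dd(\cA)\subseteq\fm\cA$, and combining an $R$-basis of $\kz$ with the $\chi_{l,m}$'s yields an $R$-basis of $\cA$, so each coefficient $\gamma^{i,j}_{l,m}$ actually lies in $\fm\kz$. I would then pick out the coefficient of the particular basis element $\chi_{l,p-a}$ in the identity $\dd(z)=0$. The first sum contributes $\dd(z_{l,a})$ (only the $j=a$, $i=l$ term matches the degree and index), while the second sum contributes only those terms with $p-a<p-j$, i.e.\ $j\le a-1$:
$$
\dd(z_{l,a})=-\sum_{j=0}^{a-1}\sum_{i=1}^{q_j}(-1)^{j}z_{i,j}\,\gamma^{i,j}_{l,p-a}.
$$

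Each summand on the right is a product of $z_{i,j}\in\fm^{t}\kz$ (by hypothesis, since $j\le a-1$) and $\gamma^{i,j}_{l,p-a}\in\fm\kz$ (by minimality), and hence lies in $\fm^{t+1}\kz$. This gives $\dd(z_{l,a})\in\fm^{t+1}\kz$ for every $l$ with $1\le l\le q_a$, as required. The main obstacle I anticipate is being careful about the bookkeeping — confirming that the chosen $\kz$-basis $\{\chi_{l,m}\}$ of $\cA$ extends to an $R$-basis by tensoring with an $R$-basis of $\kz$, so that the minimality statement $\dd(\cA)\subseteq\fm\cA$ really does force each $\kz$-coefficient $\gamma^{i,j}_{l,m}$ into $\fm\kz$ (rather than merely into some larger ideal). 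Once this is verified, the rest is just extracting a single coefficient of a DG-algebra identity.
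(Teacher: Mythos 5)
Your proof is correct and follows essentially the same route as the paper: apply the Leibniz rule to $\dd(z)=0$, expand $\dd(\chi_{i,p-j})$ in the $\kz$-basis of $\cA$ with coefficients in $\fm\kz$ (via minimality of the Tate resolution), extract the coefficient of $\chi_{l,p-a}$, and observe that only the terms with $j\le a-1$ contribute besides $\dd(z_{l,a})$ itself, so the claim follows from $(\fm^t\kz)(\fm\kz)\subseteq\fm^{t+1}\kz$. Your remark about why minimality forces each $\kz$-coefficient $\gamma^{i,j}_{l,m}$ into $\fm\kz$ (by refining the $\kz$-basis of $\cA$ to an $R$-basis) makes explicit a point the paper leaves implicit, and is exactly the right way to justify it.
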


\begin{proof}
 Since $z\in Z(\cA)$, we have $\dd(z)=0$. On the other hand we can compute $\dd(z)$ from \eqref{basis},  using the Leibniz rule; this yields:
\begin{equation}
\label{1-tate}
0=\sum _{j=0}^p\sum_{i=1}^{q_j}(-1)^{j}z_{i,j}\dd(\chi_{i,p-j})+\dd(z_{i,j})\chi_{i,p-j}.
\end{equation}
Let $i$ be such that $1\le i\le q_a$. We express all terms in the right hand side of \eqref{1-tate} in terms of the $\kz$-basis of $\cA$, and collect the terms to compute the coefficient of each basis element in the sum. We see that the coefficient of $\chi_{i,p-a}$ in this sum is 
$$
\dd(z_{i,a})+ \sum _{j=0}^{a-1}\sum_{i'=1}^{q_j}z_{i',j}w_{i',j}\,\quad \mbox { with } w_{i',j}\in \fm \kz_{a-j}\,.
$$
The coefficients $w_{i',j}$ come from expressing $\dd(\chi_{i,p-j})$ in terms of the $\kz$-basis, for $j\le a-1$. In particular, $w_{i',j}\in \fm \kz$ since $\cA$ is minimal.  (Note that if $j\ge a$ then $z_{i,j}\dd(\chi_{i,p-j})$ does not have any contribution to the coefficient of $\chi_{i,p-a}$, for degree reasons.)
These coefficients must equal $0$, hence
$$
\dd(z_{i,a})=-\sum _{j=0}^{a-1}\sum_{i'=1}^{q_j}z_{i',j}w_{i',j}\in (\fm^t\kz)(\fm \kz)\subseteq \fm^{t+1}\kz
$$
for all $i$ with $1\le i\le q_a$. 
\end{proof}

\begin{chunk}
\label{v}
Let $\widehat R$ denote the completion of $R$ with respect to $\fm$. We may write $\widehat R=Q/I$, with $(Q,\fn,k)$ a regular local ring and $I\subseteq \fn^2$; this presentation is called a {\it minimal Cohen presentation}. We set 
\begin{align*}\label{vR}
v(R)=\max\{j\mid I\subseteq \fn^j\}.
\end{align*}
As noted in \cite{HS}, this integer is independent of the choice of the minimal Cohen presentation. 
\end{chunk}

\begin{remark}
\label{I} If $v(R)\ge t+1$ then the map 
\begin{equation}
\label{original}
\HH_i(\kz^R/\fm^{t+1}\kz^R)\to \HH_i(\kz^R/\fm^{t}\kz^R)
\end{equation}
induced by the canonical homomorphism $\kz^R/\fm^{t+1}\kz^R\to \kz^R/\fm^{t}\kz^R$ is zero for all $i\ge 1$. 
In particular, we have:  If $\dd(z)\in \fm^{t+1}\kz^R$, then $z\in B(\kz^R)+\fm^{t}\kz^R$ for all $z\in \kz^R_{\geqslant 1}$. 

Indeed, to justify this statement it suffices to assume that $R$ is complete, with $R=Q/I$ as above. We can write $\kz^R=\kz^Q\otimes_QR$, where $\kz^Q$ is the Koszul complex on a minimal generating set of $\fn$ (obtained by lifting the minimal generating set picked for $\fm$). Since $I\subseteq \fn^{t+1}$ by assumption, we can make the identifications 
$\kz^R/\fm^{t+1}\kz^R=\kz^Q/\fn^{t+1}\kz^Q$ and $\kz^R/\fm^{t}\kz^R=\kz^Q/\fn^{t}\kz^Q$. The map in \eqref{original} can then be identified with the induced map 
$$\HH_i(\kz^Q/\fn^{t+1}\kz^Q)\to \HH_i(\kz^Q/\fn^{t}\kz^Q)$$
which is zero for all $i\ge 1$ because the induced map $\HH_i(\fn^{t+1}\kz^Q)\to \HH_i(\fn^t\kz^Q)$ is zero for all $i\ge 0$, since $Q $ is regular (for example, see \cite[Theorem 3.3]{S}). 
\end{remark}

We are now prepared to prove a key lemma; a reformulation of this will yield Proposition \ref{tate-prop} below.
\begin{lemma}
\label{z}
Suppose $(R,\fm,k)$ is  a local ring, $\kz$ is the Koszul complex on a minimal generating set $x_1,...,x_n$ of $\fm$, and $\cA$ is the Tate resolution of $k$. Let $t\ge 1$ be an integer such that $v(R)\ge t+1$. 
If $x\in \cA$, then there exists $y\in \kz_1\cA$ such that $\dd(x-y)\in \fm^t\kz_1\cA$.  
\end{lemma}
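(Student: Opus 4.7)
The plan is to reduce to proving the statement for each $\kz$-basis element $\chi_{i,p}$ of $\cA$ of degree $p \ge 2$, and then to construct the corresponding local $y$ by a downward induction on the $\kz$-basis degree of its support, applying Remark~\ref{I} at each step. Since $\cA$ is a free $\kz$-module on the basis $\{\chi_{i,p}\}$, I write $x = \sum c_{i,p}\chi_{i,p}$ with $c_{i,p} \in \kz$ and decompose $c_{i,p} = c^{(0)}_{i,p} + c^{(\ges 1)}_{i,p}$ with $c^{(0)}_{i,p} \in \kz_0 = R$ and $c^{(\ges 1)}_{i,p} \in \kz_{\ges 1}$. The element $y_0 := \sum c^{(\ges 1)}_{i,p}\chi_{i,p}$ already lies in $\kz_{\ges 1}\cA = \kz_1\cA$. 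Since $\dd$ is $R$-linear and $\dd(1)=0$, it then suffices to produce, for each basis element $\chi_{i,p}$ with $p \ge 2$, some $y_{i,p} \in \kz_1\cA_p$ with $\dd(\chi_{i,p}) - \dd(y_{i,p}) \in \fm^t\kz_1\cA_{p-1}$; the cases $p \le 1$ contribute nothing since $\cA_0 = R$ and $\cA_1 = \kz_1$. The final $y$ will be $y_0 + \sum c^{(0)}_{i,p} y_{i,p} \in \kz_1\cA$.

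Fix such a $\chi_{i,p}$ and expand
$$\dd(\chi_{i,p}) = \sum_{q=0}^{p-1}\sum_{i'} C_{i',q}\chi_{i',q}, \qquad C_{i',q} \in \fm\kz_{p-1-q},$$
where the containment uses the minimality of $\cA$. I will construct $y_{i,p} = \sum_{q,i'} U_{i',q}\chi_{i',q}$ with $U_{i',q} \in \kz_{p-q}$ (so $y_{i,p} \in \kz_1\cA_p$, since $p-q \ge 1$) by downward induction on $q_0$ from $q_0 = p-1$ to $q_0 = 0$. Matching the coefficient of $\chi_{i_0,q_0}$ in $\dd(y_{i,p})$ against that of $\dd(\chi_{i,p})$ via the Leibniz rule reduces the task at stage $q_0$ to solving
$$\dd(U_{i_0,q_0}) \equiv E_{i_0,q_0} \pmod{\fm^t\kz_{p-1-q_0}},$$
where
$$E_{i_0,q_0} := C_{i_0,q_0} - \sum_{q > q_0,\,i'}(-1)^{p-q}U_{i',q}\,C_{i_0,q_0}^{(i',q)}$$
and $C_{i_0,q_0}^{(i',q)}$ denotes the coefficient of $\chi_{i_0,q_0}$ in the $\kz$-basis expansion of $\dd(\chi_{i',q})$. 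Since $v(R) \ge t+1$, Remark~\ref{I} produces such a lift $U_{i_0,q_0} \in \kz_{p-q_0}$ as soon as $\dd(E_{i_0,q_0}) \in \fm^{t+1}\kz$.

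The main obstacle is to verify this containment at every stage $q_0$. Applying the Leibniz rule to $E_{i_0,q_0}$ and substituting (i) the outer cycle condition $\dd^2(\chi_{i,p})=0$, which rewrites $\dd(C_{i_0,q_0})$ as the sum $\sum(-1)^{p-q}C_{i',q}\,C_{i_0,q_0}^{(i',q)}$, (ii) the inner cycle conditions $\dd^2(\chi_{i',q})=0$ for each $q > q_0$, which likewise rewrite each $\dd(C_{i_0,q_0}^{(i',q)})$ in terms of iterated products of the $C^{(\cdot)}$'s, and (iii) the inductive identities $\dd(U_{i',q}) = E_{i',q} + \epsilon_{i',q}$ with $\epsilon_{i',q} \in \fm^t\kz$ coming from preceding stages, one finds that all the ``principal'' contributions telescope and cancel. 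The surviving residues are products $\epsilon_{i',q}\cdot C_{i_0,q_0}^{(i',q)}$, which lie in $\fm^t\kz\cdot\fm\kz = \fm^{t+1}\kz$, as required. This cancellation pattern is essentially a repackaging of Lemma~\ref{induction-step}, and its verification, while conceptually transparent, is the most delicate bookkeeping step of the argument; the small cases $p = 3,4$ already display the full combinatorial pattern. Once the containment is established at every $q_0$, the downward induction produces $y_{i,p}$ with the desired property, and the reduction in the first paragraph assembles the required $y \in \kz_1\cA$.
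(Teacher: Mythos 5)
Your proposal is correct and follows the same essential approach as the paper: an upward sweep through the Koszul-degree components of the coefficients, invoking Remark~\ref{I} at each stage, with the applicability of the remark guaranteed by a cancellation argument stemming from $\dd^2=0$ --- which is precisely the content of the paper's Lemma~\ref{induction-step}, rederived inline in your write-up. Reducing to a single $\kz$-basis element $\chi_{i,p}$ via $R$-linearity is a cosmetic variation (one could alternatively note that your cycle $\dd(\chi_{i,p}-\sum_{q>q_0}U_{i',q}\chi_{i',q})$ satisfies the hypotheses of Lemma~\ref{induction-step} directly), and the telescoping cancellation you describe --- while only sketched rather than written out --- does check out, leaving exactly the residues $\epsilon_{i',q}C^{(i',q)}_{i_0,q_0}\in\fm^{t+1}\kz$.
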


\begin{proof}
Let $x\in \cA_{p+1}$. If $p=0$, we may take $y=x$ and the result follows trivially.  Now assume $p\geq 1$.  
Since $\cA$ is minimal, we have $\dd(x)=\sum x_ig_i$ with $x_i\in \fm$ and $g_i\in \cA_p$.  Choose $A_i\in \kz_1$ such that $\dd(A_i)=x_i$ and we have 
\begin{align*}
\dd\left(x-\sum A_ig_i\right)&=\dd(x)-\sum \dd(A_ig_i)\\
&=\sum x_ig_i- \sum \dd(A_i)g_i+\sum A_i\dd(g_i)\\
&=\sum A_i\dd(g_i).
\end{align*}
Set $y'=\sum  A_ig_i$. Then $y'\in \kz_1\cA$ and $\dd(x-y')\in \kz_1\cA$.

Apply Lemma \ref{induction-step} with $z=\dd(x-y')$ and $a=1$ (noting that $z_{i,0}=0$ for all $i$ with $1\le i\le q_0$, so the hypothesis is satisfied).  This yields $\dd(z_{i,1})\in\fm^{t+1}\kz$, and then Remark \ref{I} shows 
$$
z_{i,1}=\dd(e_{i,1})+f_{i,1}
$$
for some $e_{i,1}\in \kz_2$ and $f_{i,1}\in \fm^t\kz$. Consequently: 
$$
\dd(x-y')=\sum_{i=1}^{q_{1}} (\dd(e_{i,1})+f_{i,1})\chi_{i,p-1}+V,\quad\text{with}\quad V\in \kz_2\cA.
$$
Now take $
y_1=\sum_{i=1}^{q_{1}} e_{i,1}\chi_{i,p-1}$
and we have: 
\begin{align*}
\dd(x-y'-y_1)&=\sum_{i=1}^{q_{1}} (\dd(e_{i,1})+f_{i,1})\chi_{i,p-1}+V-\sum_{i=1}^{q_{1}} \dd(e_{i,1}\chi_{i,p-1})\\ 
&=\sum_{i=1}^{q_{1}} f_{i,1}\chi_{i,p-1}+\left(V+\sum_{i=1}^{q_{1}}e_{i,1}\dd(\chi_{i,p-1})\right)\\
&=\sum_{i=1}^{q_{1}} f_{i,1}\chi_{i,p-1}+V_1, \quad\text{with } V_1\in \kz_2\cA.
\end{align*}
Set $m = \min\{p,n\}$, and let us assume inductively, for $a-1<m$, that we constructed $y_1, y_2, \dots, y_{a-1}\in \kz_1\cA$ such that 
$$
\dd(x-y'-y_1-\dots -y_{a-1})=\sum_{j=1}^{a-1}\sum_{i=1}^{q_j}f_{i,j}\chi_{i,p-j}+V_{a-1},
$$
with $f_{i,j}\in \fm^t\kz$ and $V_{a-1}\in \kz_{a}\cA$. Applying again Lemma \ref{induction-step} and Remark \ref{I}, with $z=\dd(x-y'-y_1-\dots -y_{a-1})$ we have that  
$$z_{i,a}=\dd(e_{i,a})+f_{i,a}
$$
with $e_{i,a}\in \kz_{a+1}$ and $f_{i,a}\in \fm^t\kz_a$. Consequently, we can write
$$
\dd(x-y'-y_1-\dots-y_{a-1})=\sum_{j=1}^{a-1}\sum_{i=1}^{q_j}f_{i,j}\chi_{i,p-j}+\sum_{i=1}^{q_{a}} (\dd(e_{i,a})+f_{i,a})\chi_{i,p-a}+V,
$$
with $V\in \kz_{a+1}\cA$. 
Now take 
$
y_a=\sum_{i=1}^{q_{a}} e_{i,a}\chi_{i,p-a}
$
and, as above, we get: 
$$
\dd(x-y'-y_1-\dots -y_{a})=\sum_{j=1}^{a}\sum_{i=1}^{q_{j}}f_{i,j}\chi_{i,p-j}+V_{a},
$$
with $V_a\in \kz_{a+1}\cA$. Note that $V_a=0$ when $a\ge m$, by degree reasons (if $p<n$) and since $K_{>n}=0$.

Set $y=y'+y_1+\dots +y_m$. Then the cycle $z=\dd(x-y)$ satisfies the conclusions of our statement. 
\end{proof}

We can now prove the useful decomposition property of the Tate resolution advertised above, which was  inspired by the work in \cite{GL}.
\begin{proposition}
\label{tate-prop}
Let $t\ge 1$ be an integer such that $v(R)\ge t+1$. Denote by $\cA'$ the DG subalgebra of $\cA$ given by 
$$
 \cA'=\{x\in \cA\mid \dd(x)\in \fm^t\kz_1\cA\}.
$$
Then $\cA$ is generated by $\cA'$ as a $\kz$-algebra, that is: 
$
\cA=\cA'+\kz_1\cA'+\kz_2\cA'+\dots +\kz_n\cA'=\kz\cA'.
$
\end{proposition}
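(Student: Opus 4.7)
The plan is to deduce the proposition from Lemma \ref{z} by induction on homological degree, using that $\cA$ decomposes as the direct sum of its homogeneous components $\cA_p$ and that $\cA'$ is a DG subalgebra closed under multiplication. Reading Lemma \ref{z} in the present notation, it asserts exactly that every $x\in \cA$ admits a decomposition $x=c+y$ with $c\in \cA'$ and $y\in \kz_1\cA$. Once this is granted, the proposition follows by recursively rewriting the ``$y$'' piece.

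In more detail, I would proceed as follows. The base case $p=0$ is immediate: $\cA_0=R$ consists of cycles, so $\cA_0\subseteq \cA'\subseteq \kz\cA'$. For the inductive step, assume the statement holds in all homological degrees at most $p$, and let $x\in \cA_{p+1}$. Applying Lemma \ref{z} produces $y\in \kz_1\cA$ with $\dd(x-y)\in \fm^t\kz_1\cA$, so by definition of $\cA'$ we have $x-y\in \cA'$. Passing to the homogeneous component in degree $p+1$, we may assume $y\in \kz_1\cA_p$. Write $y=\sum_i a_ig_i$ with $a_i\in \kz_1$ and $g_i\in \cA_p$. By the inductive hypothesis each $g_i$ lies in $\kz\cA'$, and since $\cA'$ is closed under multiplication, $y\in \kz_1(\kz\cA')\subseteq \kz\cA'$. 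Therefore $x=(x-y)+y\in \cA'+\kz\cA'=\kz\cA'$.

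Finally, because $\kz_j=0$ for $j>n$, the $\kz$-submodule $\kz\cA'$ coincides with the explicit finite sum $\cA'+\kz_1\cA'+\cdots +\kz_n\cA'$ stated in the conclusion. I do not expect any real obstacle in this argument: all of the technical content sits in Lemma \ref{z}, where the hypothesis $v(R)\ge t+1$ was consumed through Remark \ref{I}, and the present proposition is essentially a clean inductive repackaging of that lemma.
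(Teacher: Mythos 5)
Your argument is correct and follows essentially the same route as the paper's proof: both are formal consequences of Lemma \ref{z}, that is, of the decomposition $\cA = \cA' + \kz_1\cA$. The only difference is how the recursion terminates --- the paper substitutes the decomposition into itself repeatedly and stops after $n+1$ steps because $\kz_{n+1}=0$, whereas you induct on homological degree, using that multiplication by $\kz_1$ raises degree so the $\cA$-factor of the error term strictly drops; both terminations are valid and amount to the same unrolling. One small quibble: the inclusion $\kz_1(\kz\cA')\subseteq\kz\cA'$ has nothing to do with $\cA'$ being closed under multiplication --- it follows simply from $\kz_1\kz_j\subseteq\kz_{1+j}$ (i.e., $\kz$ is a graded algebra) together with associativity of the $\kz$-action on $\cA$.
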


\begin{proof}
For $x\in \cA$, Lemma \ref{z} provides an element $y\in \kz_1\cA$ such that $x-y\in \cA'$.
A reformulation of Lemma \ref{z} therefore gives that $\cA=\cA'+\kz_1\cA$. Applying this fact repeatedly, and noting that $\kz_{n+1}=0$, we  get: 
\begin{align*}
\cA&= \cA'+ \kz_1\cA=\cA'+\kz_1(\cA'+\kz_1\cA)=\cA'+\kz_1\cA'+\kz_2\cA=\cdots \\
&= \cA'+\kz_1\cA'+\kz_2\cA'+\cdots +\kz_n(\cA'+\kz_1\cA)\\
&=\cA'+\kz_1\cA'+\kz_2\cA'+\cdots +\kz_n\cA'\\
&=\kz\cA'.
\end{align*}
\end{proof}

\section{Generation by a special set}
\label{Generation_by_special_set_section}
We continue with the notation of the previous sections for the Koszul complex and the Tate resolution of a local ring $R$. In this section, we prove one of the main theorems, Theorem \ref{thm1} below, and we point out its applications. In particular, these applications include a computation of the Poincar\'e series $\Po_k^R(z)$ and conditions under which the map $R\to R/\m^s$ is Golod. 

Recall that the invariant $v(R)$ was introduced in \ref{v}. 

\begin{theorem}
\label{thm1}
Let $(R,\m,k)$ be a local ring and let $s$ be an integer such that $\m^{s+1}=0$.    
Let $t$ and $b$ be integers such that $s-t\le b\le s-1$ and $v(R)\ge t+1\ge 2$, and assume that the following condition holds:
\begin{itemize}
\item[$\mathcal Z_{t,b,s}$:] There exists a finite set $Z\subseteq Z(\fm^t\kz^R)$ such that $zz'=0$ for all $z, z'\in Z$
and for every $v \in \m^s\kz^R$ there exists $m\in \mathbb N$ and $z_i\in Z$, $u_i \in Z(\m^{b}\kz^R)$  for each $i$ with $1\le i\le m$, such that 
$v - \sum_{i=1}^m z_iu_i \in B(\m^{s-1}\kz^R)$.  
\end{itemize}
The maps $\Tor_i^R(\m^s,k)\to \Tor_i^R(\m^{b},k)$ induced by the inclusion $\fm^s\subseteq \fm^b$ are then zero for all $i\ge 0$.
\end{theorem}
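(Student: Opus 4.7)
The plan is to compute $\Tor_i^R(\fm^j, k)$ via the Tate resolution $\cA$ of $k$ over $R$: since $\cA$ is $R$-free, $\Tor_i^R(\fm^j,k)=H_i(\fm^j\cA)$ and the induced map is realized by the inclusion of subcomplexes $\fm^s\cA\hookrightarrow\fm^b\cA$. The first key simplification is that minimality of $\cA$ together with $\fm^{s+1}=0$ forces $\dd(\fm^s\cA)\subseteq\fm\cdot\fm^s\cA=0$, so the differential on $\fm^s\cA$ is identically zero. Consequently $H_i(\fm^s\cA)=\fm^s\cA_i$, and the theorem reduces to: every $\alpha\in\fm^s\cA_i$ satisfies $\alpha=\dd(\beta)$ for some $\beta\in\fm^b\cA_{i+1}$. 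The case $i=0$ is immediate from $\fm^s\subseteq\fm^{b+1}=\dd(\fm^b\kz_1)\subseteq B_0(\fm^b\cA)$, using $b\le s-1$.

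For $i\ge 1$ I would combine Proposition~\ref{tate-prop} with condition $\mathcal Z_{t,b,s}$. The equality $\cA=\kz\cA'$ together with $\fm^{s+1}=0$ yields $\fm^s\cA=\fm^s\kz\cdot\cA'$, so any $\alpha\in\fm^s\cA_i$ admits an expansion $\alpha=\sum_l v_l a_l$ with $v_l\in\fm^s\kz$ and $a_l\in\cA'$. Applying $\mathcal Z_{t,b,s}$ to each $v_l$ gives $v_l=\sum_m z_{l,m}u_{l,m}+\dd(w_l)$ with $z_{l,m}\in Z$, $u_{l,m}\in Z(\fm^b\kz)$, and $w_l\in\fm^{s-1}\kz\subseteq\fm^b\kz$. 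Since $\cA$ resolves $k$, each positive-degree $z_{l,m}$ is a boundary $z_{l,m}=\dd(y_{l,m})$ in $\cA$, and the Leibniz rule (using $\dd(u_{l,m})=0$) yields
\[
\alpha=\dd\!\Bigl(\,\textstyle\sum_{l,m} y_{l,m}u_{l,m}a_l+\sum_{l}w_la_l\,\Bigr)+\alpha_1,
\]
where the element inside $\dd$ lies in $\fm^b\cA$ (since $u_{l,m},w_l\in\fm^b\kz$), and $\alpha_1\in\fm^s\cA$ is a remainder consisting of terms of the form $c\cdot\dd(a_l)$ with coefficients $c$ in $\fm^b\kz\cdot\cA$ or $\fm^{s-1}\kz$, and $\dd(a_l)\in\fm^t\kz_1\cA$ coming from $a_l\in\cA'$. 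A short separate argument covers degree-zero elements $z\in Z$ (where $z\in\fm^t$ is a scalar), exploiting that $z\cdot u_{l,m}a_l\in\fm^{t+b}\cA$ with $t+b\ge s$ together with the cycle property of $u_{l,m}$.

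The main obstacle is disposing of the remainder $\alpha_1\in\fm^s\cA$. My plan is to iterate: re-decompose $\alpha_1$ via Proposition~\ref{tate-prop} and reapply $\mathcal Z_{t,b,s}$. The hypothesis $zz'=0$ for all $z,z'\in Z$ is essential here, because it makes the ideal $(Z)\subseteq\cA$ square to zero; consequently, when the new cycles $z'_{p,q}$ produced by the second pass meet the lifts $y_{l,m}$ of the previous $z_{l,m}$ through the Leibniz expansion, the surviving products either vanish or descend to strictly higher powers of $\fm$, while everything else is already of the form $\dd(\text{element of }\fm^b\cA)$. Since $\fm^{s+1}=0$, this filtration-lowering terminates after finitely many steps and leaves $\alpha\in B_i(\fm^b\cA)$. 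The delicate part of the proof is the careful bookkeeping of Leibniz signs together with the edge cases $t=1$ and $b=s-t$, where the crude degree estimates fail to make the Leibniz error terms vanish outright; it is precisely there that the full force of $b\ge s-t$, $v(R)\ge t+1$, and $zz'=0$ must be combined.
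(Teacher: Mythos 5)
Your proposal correctly reduces the theorem to showing $\fm^s\cA\subseteq\dd(\fm^b\cA)$ using the Tate resolution, and correctly identifies the three main ingredients: Proposition~\ref{tate-prop} to express things as $\fm^s\kz\cdot\cA'$, the condition $\mathcal Z_{t,b,s}$ to rewrite the $\kz$-coefficients, and the Leibniz rule to absorb boundary terms. The case $i=0$ is also handled correctly. However, the iteration and termination step — which is the heart of the proof — has a genuine gap, and the mechanism you propose does not work.

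You claim the remainder $\alpha_1$ has terms that ``descend to strictly higher powers of $\fm$'' and invoke $\fm^{s+1}=0$ to terminate. But $\alpha_1$ already sits in $\fm^s\cA$, the top nonzero filtration level: after Leibniz, the surviving terms are $y_{l,m}u_{l,m}\dd(a_l)$ and $w_l\dd(a_l)$, whose $\fm$-power estimates ($\ge b+t\ge s$ and $\ge s-1+t\ge s$) land you right back in $\fm^s\cA$. There is no $\fm$-filtration left to descend, so the iteration as described has no reason to terminate. Moreover, the ideal $(Z)\subseteq\cA$ does \emph{not} square to zero — $(Z)^2$ contains elements $zaz'b$ with $a,b\in\cA$, which need not vanish — so that heuristic does not rescue the argument.

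What actually makes the iteration terminate is a different filtration, and it requires a combinatorial device you have not introduced. The paper defines elements $y_I\in\cA$ indexed by all finite ordered lists $I=(i_1,\dots,i_r)$ of indices from $Z$, with $y_\emptyset=1$ and $\dd(y_I)=z_{i_r}y_{I^-}$; the hypothesis $zz'=0$ is used exactly to check that $z_{i_r}y_{I^-}$ is a cycle, so that $y_I$ can be chosen by acyclicity of $\cA$. (You only form lifts of single $z\in Z$, i.e.\ the lists of length one; that is insufficient because on each pass a new ``layer'' of $y$-memory appears.) One then refines the decomposition to $\cA(i,j)=(\fm^s\kz_iY_j\cA')\cap\cA_c$, tracking both the Koszul degree $i$ and the degree $j$ of the $y_I$-factor, and proves that $\cA(i,j)\subseteq\dd(\fm^b\cA)+\sum_{p+q=i+j,\,p>i}\cA(p,q)+\sum_{p+q>i+j}\cA(p,q)$. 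Termination then comes not from $\fm$-powers but from the bound $p+q\le c$ (the fixed total homological degree), which forces the iteration to run off the end of the complex. This $(i+j,i)$-ordering is the potential function your proposal is missing; without it, there is no valid convergence argument.
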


We postpone the proof of the theorem in order to give some corollaries. We will use below, and also in the next section, the following result of Rossi and \c Sega.

\begin{chunk}
\cite[Lem.~1.2]{RS}\label{RS1.2} Let  
$\kappa:(P,\mathfrak p,k)\to (R,\mathfrak m,k)$ be 
 a  
surjective homomorphism of local rings.  
If there exists a positive integer $a$ such that{\rm:}
\begin{enumerate}[\rm(a)]
\item the map $\Tor^ P_i(R,k)\to\Tor^ P_i(R/\mathfrak m^a,k)$ induced by the canonical quotient map $R\to R/\mathfrak m^a$ is zero for all positive $i$, and
\item the map $\Tor^P_i(\mathfrak m^{2a},k)\to \Tor^P_i(\mathfrak m^a,k)$ induced by the inclusion $\mathfrak m^{2a}\subseteq \mathfrak m^a$ is zero for all non-negative integers $i$,\end{enumerate}
then $\kappa$ is a Golod homomorphism.
\end{chunk}

\begin{corollary}
Under the hypotheses of Theorem {\rm \ref{thm1}}, if  $2b\ge  s$  then the homomorphism $R\to R/\m^s$ is Golod.
\end{corollary}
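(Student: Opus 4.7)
The plan is to invoke the Rossi--\c{S}ega criterion \ref{RS1.2} for the canonical projection $R\to R/\m^s$, taking $a=b$. Writing $\bar R=R/\m^s$ and $\bar\m=\m/\m^s$, we have $\bar R/\bar\m^b\cong R/\m^b$ since $b\le s-1$. The hypothesis $2b\ge s$ together with $\m^{s+1}=0$ forces $\bar\m^{2b}=(\m^{2b}+\m^s)/\m^s=0$, so condition (b) of \ref{RS1.2}---vanishing of $\Tor_i^R(\bar\m^{2b},k)\to\Tor_i^R(\bar\m^b,k)$ for $i\ge 0$---is automatic.

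The substantive task is to verify condition (a): that $\Tor_i^R(R/\m^s,k)\to\Tor_i^R(R/\m^b,k)$ is zero for every $i\ge 1$. The input for this is the conclusion of Theorem \ref{thm1}, which supplies the vanishing of $\Tor_j^R(\m^s,k)\to\Tor_j^R(\m^b,k)$ for all $j\ge 0$. To transfer the vanishing from ideals to quotients, I will use the standard observation that for any proper ideal $I\subseteq\m$ there is a natural isomorphism $\Tor_i^R(R/I,k)\cong\Tor_{i-1}^R(I,k)$ for every $i\ge 1$: this follows from the long exact sequence attached to $0\to I\to R\to R/I\to 0$, because $\Tor_{\ge 1}^R(R,k)=0$ and the natural map $I\otimes_R k\to R\otimes_R k$ is zero whenever $I\subseteq\m$. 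Applied to $I=\m^s$ and $I=\m^b$, and combined with the naturality of the long exact sequence with respect to the inclusion $\m^s\subseteq\m^b$, this identifies the map on $\Tor_i^R$ of the quotients with the map on $\Tor_{i-1}^R$ of the ideals, which is zero by Theorem \ref{thm1}. There is no serious obstacle here; the argument is an essentially routine translation of Theorem \ref{thm1} into the form required by \ref{RS1.2}, after which the corollary follows immediately.
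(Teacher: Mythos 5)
Your proof is correct and follows the same route as the paper's: apply \ref{RS1.2} to $R\to R/\m^s$ with $a=b$, note condition (b) is vacuous because $\bar\m^{2b}=0$ when $2b\ge s$, and deduce condition (a) from Theorem \ref{thm1} by passing from ideals to quotients. Your explicit use of the natural isomorphism $\Tor_i^R(R/I,k)\cong\Tor_{i-1}^R(I,k)$ for $I\subseteq\m$ simply makes precise the step the paper's proof treats as immediate.
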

\begin{proof}
We apply \ref{RS1.2} to the  natural projection $\kappa\col R\to R/\fm^s$. Since $2b \ge s$, we have $\m^{2b}/\m^s = 0$, so the map $\Tor_i^R(\m^{2b}/\m^s,k) \to \Tor_i^R(\m^{b}/\m^s,k)$ induced by the inclusion $\m^{2b}/\m^s \subseteq  \m^b/\m^s$ is zero for all $i \ge 0$. By Theorem \ref{thm1}, we have that the induced maps $\Tor_i^R(R/\m^s,k) \to \Tor_i^R(R/\m^b,k)$ are also zero for all $i>0$. Thus, the conditions of  \ref{RS1.2} are satisfied and the result follows.
\end{proof}

\begin{remark}\label{rem}
Let $R=Q/I$ be a minimal Cohen presentation of $R$, with $(Q,\fn,k)$ a regular local ring. As first noted by L\"ofwall \cite{Lo}, the ring $R$ is Golod whenever there exists an integer $t$ such that 
\begin{equation}
\label{inclusions}
\fn^{2t}\subseteq  I\subseteq \fn^{t+1}
\end{equation}

Assume $s$ and $t$ are integers such that $\fm^{s+1}=0$ and $v(R)\ge t+1$. If $s<2t$, then the inclusions in \eqref{inclusions} hold, and  it follows that  $R$ is Golod.  If $s=2t$ then $R$ is not necessarily Golod, but it follows that the quotient ring $R/\fm^s=Q/(I+\fn^s)$ is Golod. 
\end{remark}

\begin{corollary}
\label{formula}
Assume the hypothesis of Theorem {\rm \ref{thm1}} is satisfied. If $s=2t$ and $b=t$, then the hypotheses of Lemma {\rm \ref {lem_poincare_rational}} are satisfied, and thus $\Po_k^R(z)$ satisfies the formula {\rm\eqref{Golod-form}}. 
\end{corollary}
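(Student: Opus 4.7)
The plan is to verify the two standing hypotheses of Lemma~\ref{lem_poincare_rational} under the specialized assumptions $s=2t$ and $b=t$, and then simply invoke the formula stated there. The condition $\fm^{s+1}=0$ is already part of the hypothesis of Theorem~\ref{thm1}, so the remaining work is to establish (i) that $R/\fm^s$ is a Golod ring, and (ii) that the canonical projection $R\to R/\fm^s$ is small.

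For (ii), I would observe that the specialized values $s=2t$ and $b=t$ give $2b=2t=s$, so $2b\ge s$. This is precisely the hypothesis of the corollary immediately preceding Remark~\ref{rem}, which concludes that $R\to R/\fm^s$ is a Golod homomorphism. Since every Golod homomorphism is small (as recorded in~\ref{prelim} via Avramov \cite[3.5]{Av}), the smallness condition in Lemma~\ref{lem_poincare_rational} is automatic.

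For (i), Remark~\ref{rem} handles exactly the case at hand: with $s=2t$ and the assumption $v(R)\ge t+1$ from Theorem~\ref{thm1}, a minimal Cohen presentation $R=Q/I$ satisfies $\fn^{2t}\subseteq I+\fn^{2t}\subseteq \fn^{t+1}$, so L\"ofwall's criterion shows that $R/\fm^s=Q/(I+\fn^s)$ is Golod. This gives the remaining hypothesis of Lemma~\ref{lem_poincare_rational}.

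With both hypotheses verified, applying Lemma~\ref{lem_poincare_rational} directly yields the formula~\eqref{Golod-form} for $\Po_k^R(z)$, completing the proof. I expect no real obstacle here, since the corollary is essentially a bookkeeping exercise that combines three already-stated results (Theorem~\ref{thm1} via its preceding corollary, Remark~\ref{rem}, and Lemma~\ref{lem_poincare_rational}) under the single numerical specialization $s=2t$, $b=t$.
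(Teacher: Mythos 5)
Your proof is correct and follows essentially the same route as the paper: use the corollary immediately after Theorem~\ref{thm1} (with $2b=2t=s$) to get that $R\to R/\fm^s$ is Golod hence small, use Remark~\ref{rem} to get that $R/\fm^s$ is Golod, then invoke Lemma~\ref{lem_poincare_rational}. The paper's proof is terser but relies on exactly these three ingredients.
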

\begin{proof}
The homomorphism $R\to R/\fm^s$ is Golod by Theorem \ref{thm1}, and thus small (see \ref{prelim}). The ring $R/\fm^s$ is Golod by Remark \ref{rem}.
\end{proof}

Concrete examples when these results can be applied will be given in Section \ref{examples}. 

\begin{proof}[Proof of Theorem {\rm\ref{thm1}}] Let $|Z|$ denote the cardinality of $Z$. 
Let $\{z_1,...,z_{|Z|}\}$ be the cycles in $Z$ and let $\mathcal{I}$ denote the set of all finite ordered lists of elements in $\{1, \ldots, |Z|\}$, including the empty set. 

Let $I\in \mathcal I$.  If $I=(i)$ has length $1$, we set $I^{-}=\emptyset$. If $I=(i_1, \dots, i_r)$ has length $r\ge 2$, we set $I^{-}=(i_1, \dots, i_{r-1})$. 
We now define for each $I\in \mathcal I$ an element $y_I$ such that 
\begin{enumerate}
\item $y_I=1\in \cA_0$, if $I=\emptyset$;
\item $\dd(y_I)=z_{i_r}y_{I^-}$, if $I=(i_1, \dots, i_r)$ with $r\ge 1$. 
\end{enumerate}
The details of constructing these elements are as follows. If $I=\emptyset$, we choose $y_I$ as in (1).  If $r=1$ and $I = (i)$, we can choose $y_I \in \cA$ such that $\del(y_I) = z_i$, since $z_i \in Z$ is a cycle. Assuming that $r\ge 2$ and the elements $y_I$ have been defined for all $I\in \mathcal I$ of length $r-1$, we can construct elements $y_I$  satisfying (2) for $I=(i_1, \dots, i_r)$  by noting that $z_{i_r}y_{I^{-}}$ is a cycle, so such a $y_I$ exists. Indeed, we have 
$$\dd(z_{i_r}y_{I^{-}})=(-1)^{|z_{i_r}|} z_{i_r}\dd(y_{I^-})=(-1)^{|z_{i_r}|} z_{i_r}z_{i_{r-1}}y_{(I^-)^-}=0,$$
where the last equality is due to the hypothesis on the set $Z$. 

We identify the map $\Tor_i^R(\m^s,k)\to \Tor_i^R(\m^{b},k)$ with the map $H_i(\m^s\cA)\to H_i(\m^b\cA)$.  
In order to show this map is trivial for $i\geq 0$, we let $x\in \fm^s\cA_c$ for some $c$ and must show that $x\in \dd(\fm^b\cA)$.

Set $Y= \{y_I \in \cA | I \in \mathcal{I}\}$. For $i, j \geq 0,$ define
$$\cA(i,j):=\left(\m^s\kz_iY_j\cA'\right)\cap \cA_c,$$
where $Y_j$ is the set of elements of $Y$ of degree $j$.

\medskip

\noindent {\it Claim}: For $i,j\geq 0$,
\begin{equation}
\label{claim}
\cA(i,j) \subseteq \del(\m^{b}\cA) + \sum_{\stackrel{p+q= i+j}{p>i}} \cA(p,q)+\sum_{p+q> i+j}\cA(p,q)
\end{equation}
where only finitely many terms in this sum are not zero, for degree reasons. 

\medskip

\noindent {\it Proof of Claim}: 
To prove the inclusion, it suffices to consider elements of $\cA(i,j)$ of the form $vy_Ia'$ for some $v \in \m^s\kz_i$, $a'\in \cA'$ and $y_I \in Y_j$,with $I=(i_1, \ldots, i_r) \in \mathcal{I}$ or $I=\emptyset$; in the last case, we set $r=0$.   (Note that every element of $\cA(i,j)$ can be written as a sum of elements of this form.)

By assumption  there exist $ z_{\iota, v} \in Z$ and $u_{\iota,v}\in Z(\fm^{b}\kz)$ with $1\le \iota\le m$ such that 
$$ v- \sum_{\iota=1}^m z_{\iota, v}u_{\iota,v}\in \del(\m^{s-1}\kz_{i+1}).$$ 

We need to show thus that $(\sum_{\iota=1}^m z_{\iota, v}u_{\iota,v}+w)y_I\cA'$ is contained in the right hand side of \eqref{claim}, for all  $w\in \del(\m^{s-1}\kz_{i+1})$, $ z_{\iota, v} \in Z$ and $u_{\iota,v} \in Z(\fm^{b}\kz)$. Note that it suffices to prove this statement when $m=1$. We assume thus that $v- z_{i_{r+1}}u\in \del(\m^{s-1}\kz_{i+1})$ for some $z_{i_{r+1}}\in Z$ and $u\in Z(\fm^{b}\kz)$ and we  show that $vy_I\cA'$ is contained in the right hand side of \eqref{claim}. 

In what follows, we will simplify the notation when $I$ has length $1$ and we will write $y_i=y_{(i)}$.  
Define $I^+=(i_1,\dots, i_r, i_{r+1})$. Recall that we defined a set $I^{-}$ for any nonempty $I\in \mathcal I$. Although we do not define a set $\emptyset^{-}$, we agree to set $y_{I^{-}}=z_{i_r}=y_{i_r}=0$ when $I=\emptyset$. With this convention, note that the formula $\dd(y_I)=z_{i_r}y_{I^-}$  also holds when $I=\emptyset$, so we will not treat this case separately. 

Note that
\begin{align*}
& |u| =i-|z_{i_{r+1}}|= i+1-|y_{i_{r+1}}|,\\
&|y_{I^-}| = j - |z_{i_r}| - 1 = j - |y_{i_r}|  \quad\text{when}\quad  I\ne \emptyset, \text{ and} \\
&|y_{I^+}| = j + |z_{i_{r+1}}| + 1 = j + |y_{i_{r+1}}|.
\end{align*}

We now have 
\begin{eqnarray*}
vy_I\cA' &\subseteq& \left(\del(\m^{s-1}\kz_{i+1})+z_{i_{r+1}}u\right)y_I\cA' \\
&=&\del(\m^{s-1}\kz_{i+1})y_I\cA' +u \del(y_{I^+})\cA' \\
& =&\del(\m^{s-1}\kz_{i+1}y_I\cA') +u\del(y_{I^+})\cA' +\m^{s-1}\kz_{i+1}z_{i_r}y_{I^-}\cA'+\m^{s-1}\kz_{i+1}y_I\del(\cA') \\
& \subseteq & \del(\m^{s-1}\cA) +\del(u y_{I^+}\cA') +uy_{I^+}\del(\cA')+\m^{s-1}\kz_{i+1}z_{i_r}y_{I^-}\cA'+ \m^{s-1}\kz_{i+1}y_I\del(\cA'),
\end{eqnarray*}
where in line 2 we used the formula $z_{i_{r+1}}y_I=\del(y_{I^+})$, in line 3 we used the Leibniz rule and the formula  $\del(y_I)=z_{i_r}y_{I^{-}}$, and in line 4 we used again the Leibniz rule and  the fact that $u$ is a cycle.

Consider the terms from the last line in the previous display. Since $u \in Z(\m^{b}\kz)$ and $b\le s-1$, we have $\del(\m^{s-1}\cA) +\del(u y_{I^+}\cA') \subseteq \del(\m^{b}\cA)$. Additionally, since $\dd(\cA')\subseteq \fm^t \kz_1\cA$ by the definition of $\cA'$, we have $uy_{I^+}\del(\cA') \subseteq \m^{b+t}\kz_{|u|+1} y_{I^+}\cA$ and $\m^{s-1}\kz_{i+1}y_I\del(\cA') \subseteq \m^{s-1+t} \kz_{i+2}y_I\cA$. Furthermore, since $z_{i_r} = \del(y_{i_r})$, we have $\m^{s-1}\kz_{i+1}z_{i_r}y_{I^-}\cA' \subseteq \m^s\kz_{i+|y_{i_r}|} y_{I^-}\cA'$. Thus
$$vy_I\cA' \subseteq \del(\m^{b}\cA) +\m^{b+t}\kz_{|u|+1} y_{I^+}\cA+ \m^s\kz_{i+|y_{i_r}|} y_{I^-}\cA'+\m^{s-1+t} \kz_{i+2}y_I\cA.$$
Using Proposition \ref{tate-prop} and the facts $b +t \geq s$ and $t \geq 1$, we have
$$
vy_I\cA' \subseteq \del(\m^{b}\cA) +\m^{s}\kz_{|u|+1} y_{I^+}\cA' + \m^s\kz_{i+|y_{i_r}|} y_{I^-}\cA'+\m^{s} \kz_{i+2}y_I\cA',
$$
with the provison that if $I=\emptyset$, the third term on the right of this inclusion is $0$ by convention. 
Finally, since $|u|= i+1-|y_{i_{r+1}}|$, we conclude
$$vy_I\cA' \subseteq \del(\m^{b}\cA) +\sum_{i'\ge i+2-|y_{i_{r+1}}|} \cA(i',j+|y_{i_{r+1}}|) + \cA(i+|y_{i_r}|,j-|y_{i_r}|) + \sum_{i'\ge i+1}\cA(i'+1,j),$$
with the caveat that we remove the term  $\cA(i+|y_{i_r}|,j-|y_{i_r}|)$ from the right-hand sum when $I=\emptyset$. 
In the display above, the second and last terms of the right-hand side are sums of the form $\cA(p,q)$ with $p+q>i+j$ and the third term is of the form  $\cA(p,q)$ with $p+q=i+j$ and  $p>i$. The Claim is thus proved.

To finish the proof of the theorem, consider an order on the set $M=\{(i,j)|i,j \geq 0 \}$ as follows:  Order the elements by $i+j$ first, then by $i$ as a tiebreak.  In other words: 
$$(i,j)>(i',j') \iff i+j>i'+j' \quad\text{or}\quad (i+j=i'+j' \quad \text{and}\quad i>i').
$$

Recall that $x\in \fm^s\cA_c$ and we need to show $x\in \dd(\fm^b\cA)$. Using Proposition \ref{tate-prop}, we know that $x\in \sum_{i=0}^n \fm^s \kz_i\cA'$.
Thus if $M_0=\{(i,0)\mid 0\le i\le n\}$, then 
$$x \in \sum_{(i,j) \in M_0}\cA(i,j)\,.$$ Using the Claim, we see that there exists a finite set $M_1$ and an element
 $$x_1\in \sum_{(i,j) \in M_1}\cA(i,j)$$ such that $x-x_1\in \dd(\fm^b\cA)$, and such that the smallest element of $M_1$ is strictly larger (in the order described above) than the smallest element of $M_0$. 

Applying the Claim again, this time using $x_1$, we see that there exists a finite set $M_2$ and an element $x_2$ such that $$x_2\in \sum_{(i,j) \in M_2}\cA(i,j)$$ such that $x_1-x_2\in \dd(\fm^b\cA)$, hence $x-x_2\in \dd(\fm^b\cA)$, and  such that the smallest element of $M_2$ is strictly larger (in the order described above) than the smallest element of $M_1$. A repeated use of the argument  ensures the construction of elements $x_a$ and sets $M_a$ such that for each integer $a$ the smallest element of $M_a$ is greater than the smallest element of $M_{a-1}$ and such that $x-x_a\in \dd(\fm^b\cA)$. When $a$ is sufficiently large (so that $p+q>c$ for all $(p,q)\in M_a$) one sees that, for degree reasons, we have  $\cA(p,q)=0$ for all $(p,q)\in M_a$,  since the elements of $\cA(p,q)$ are in $\mathcal A_c$, with $c$ fixed.  We conclude that  $x_a=0$ for $a$ sufficiently large, hence $x\in \dd(\fm^b\cA)$. 
\end{proof}

\section{Generation by one element}\label{Generation_by_one_element_section}
We now turn our focus to the case where the Koszul homology algebra is generated by a single element. Namely, here we are concerned with rings that satisfy the following condition, depending on integers $t$ and $r$: 
\begin{itemize}
\item[$\mathcal P_{t,r}$:] There exists $[l]\in \HH_r(\kz)$ such that for every $z\in Z(\fm^t\kz)$ there exists $z'\in Z(\fm^{t-1}\kz)$ such that $z-z'l\in B(\fm^{t-1}\kz)$.
\end{itemize}
Note that this condition is independent of the choice of the representative $l$ of the class $[l]\in \HH_r(\kz)$.

\begin{remark}\label{not-minimal}
The condition $\mathcal P_{t,r}$ is particularly strong when the cycle $l$ is not a minimal generator of $Z_r(\kz)$. In this case, we can choose  $l\in \fm Z_r(\kz)$. Since $\kz$ is constructed using a minimal generating set for $\fm$, we have that any cycle in $Z_r(\kz)$ is also in $\fm \kz$ and thus in $Z_r(\fm \kz)$. Let $i\ge 0$. 
We have thus $lz'\in \fm Z_{i} (\fm^t \kz)$ for all $z'\in  Z_{i-r}(\fm^{t-1}\kz)$.  The hypothesis that  $\mathcal P_{t,r}$ holds then implies 
$$Z_i(\fm^t \kz)\subseteq \fm Z_i(\fm^t \kz)+B_i(\fm^{t-1}\kz)$$
for all $i$, and so by Nakayama's Lemma, we have $Z_i(\fm^t\kz)\subseteq B_i(\fm^{t-1}\kz)$, hence the induced map $\HH_i(\fm^t\kz)\to \HH_i(\fm^{t-1}\kz)$ is zero for all $i\geq 0$.

When $l$ is part of a minimal generating set for $Z_r(\kz)$ and $r$ is odd,  we see below that a similar statement can be deduced, only that the complex $\kz$ needs to be replaced with a larger complex. 
\end{remark}

We denote by $\cB$ the following DG algebra
$$\mathcal B=\kz\langle y\mid \dd(y)=l\rangle\,.$$

\begin{theorem}
\label{thm2}
Let $(R,\fm,k)$ be a local ring, let $t\ge 2$ be an integer, and $r\ge 1$ be an odd integer. If $\mathcal P_{t,r}$ holds, then the map
$$
\HH_i(\fm^{t}\mathcal B)\to \HH_i(\fm^{t-1}\mathcal B)
$$
induced by the inclusion $\fm^t\hookrightarrow\fm^{t-1}$ is zero for all  $i\ge 0$.
\end{theorem}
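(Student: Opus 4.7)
The plan is to induct on the highest divided power $N$ of $y$ in the cycle $v = \sum_{j=0}^N v_j y^{(j)}$ with each $v_j \in \fm^t\kz$. Since $|l|=r$ is odd, strict graded-commutativity of $\kz$ gives $l^2 = 0$ and divided powers obey $\dd(y^{(k)}) = l\,y^{(k-1)}$; the cycle condition $\dd(v)=0$ amounts to $\dd(v_j) + (-1)^{|v_{j+1}|} v_{j+1} l = 0$ for every $j$, which in particular forces the top coefficient $v_N$ to lie in $Z(\fm^t\kz)$.

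For the base case $N=0$, apply $\mathcal P_{t,r}$ to $v_0$ to write $v_0 = z'l + \dd(w)$ with $z' \in Z(\fm^{t-1}\kz)$ and $w \in \fm^{t-1}\kz$; since $\dd(z'y) = (-1)^{|z'|} z' l$, we obtain $v_0 = \dd\bigl((-1)^{|z'|} z'y + w\bigr) \in B(\fm^{t-1}\mathcal B)$. For the inductive step $N\ge 1$, apply $\mathcal P_{t,r}$ to $v_N$ to get $v_N = z'_N l + \dd(w_N)$ with $z'_N, w_N \in \fm^{t-1}\kz$, and set
\[
u_N = (-1)^{|z'_N|}z'_N y^{(N+1)} + w_N y^{(N)} \in \fm^{t-1}\mathcal B.
\]
A direct computation using $\dd(l)=0$ and $l^2=0$ gives $\dd(u_N) = v_N y^{(N)} + (-1)^{|w_N|} w_N l\,y^{(N-1)}$, so
\[
v - \dd(u_N) = \sum_{j=0}^{N-2} v_j y^{(j)} + \bigl(v_{N-1} - (-1)^{|w_N|} w_N l\bigr) y^{(N-1)}
\]
is a cycle in $\fm^{t-1}\mathcal B$ of top $y$-power at most $N-1$.

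The main obstacle is that the new top coefficient $v_{N-1} - (-1)^{|w_N|} w_N l$ lies in $\fm^{t-1}\kz$, not $\fm^t\kz$, so $\mathcal P_{t,r}$ cannot be invoked directly on it. I would resolve this by strengthening the inductive claim to cycles in $\fm^{t-1}\mathcal B$ whose top coefficient decomposes as $a + bl$ with $a \in \fm^t\kz$ and $b \in \fm^{t-1}\kz$; the reduced cycle above fits this pattern with $a = v_{N-1}$ and $b = -(-1)^{|w_N|} w_N$. For such cycles, $l^2 = 0$ gives $(a+bl)l = al \in Z(\fm^t\kz)$, to which $\mathcal P_{t,r}$ does apply, and combined with the cycle relation $\dd(a) = -\dd(b)l$ one extracts a compatible reduction step that preserves the decomposition pattern while lowering the top $y$-power by one. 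Iterating at most $N$ times lands us in the base case and completes the proof; throughout, the oddness of $r$ (so $|y|$ is even) is essential both for the identity $l^2=0$ and for consistent sign bookkeeping under the divided-power arithmetic.
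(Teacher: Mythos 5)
Your core reduction --- write $v = \sum_{j=0}^N v_j y^{(j)}$, observe the top coefficient $v_N$ is a cycle in $\fm^t\kz$, apply $\mathcal P_{t,r}$, and subtract the boundary $\dd(u_N)$ to lower the top $y$-power --- is exactly the paper's argument, which packages the same two steps as Claims 1 and 2 and iterates (tracking the lowest Koszul degree $p$ rather than the highest $y$-power $q$, but since $p + (r+1)q$ is the total degree these inductions coincide). Your base case and your computation of $\dd(u_N)$ are both correct.

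The obstacle you perceive, however, is illusory, and the elaborate fix it prompts is unnecessary. Because $r\ge 1$ and $\kz$ is built on a minimal generating set of $\fm$, every cycle in $Z_r(\kz)$ lies in $\fm\kz$; in particular $l\in\fm\kz$ (this is recorded in Remark~\ref{not-minimal}). Hence $w_N l \in (\fm^{t-1}\kz)(\fm\kz) \subseteq \fm^t\kz$, so the new top coefficient $v_{N-1} - (-1)^{|w_N|}w_N l$ already lies in $\fm^t\kz$, and the reduced cycle $v - \dd(u_N)$ stays in $\fm^t\cB$ with strictly smaller top $y$-power. Your naive induction on $N$ therefore closes directly, and the strengthened hypothesis with the ``$a+bl$'' decomposition is vacuous (since $bl\in\fm^t\kz$ anyway, such cycles are just cycles in $\fm^t\cB$). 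The paper's Claim 2 uses the same containment: the inclusion $\fm^{t-1}\kz_{p+1}\dd(y^{(q)}) = \fm^{t-1}\kz_{p+1}l\,y^{(q-1)} \subseteq \fm^t\kz_{p+r+1}y^{(q-1)}$ is exactly the point where $l\in\fm\kz$ enters. Once you note this one fact, your write-up is complete and is the paper's proof.
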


\begin{proof}
We will first prove two claims.
 
\medskip

\noindent {\it Claim 1}: For a cycle $z\in K$, if $z=z'+w$ with $z'\in \fm^t\kz_py^{(q)}$ and $w\in\fm^t \kz_{p+1}\cB$, then $z'\in  Z_{p}(\fm^t\kz)y^{(q)}$.

\medskip

\noindent {\it Proof of Claim 1}: Set $|z|=c$, hence $p+(r+1)q=c$. Write $z'=vy^{(q)}$ with $v\in \fm^t\kz_p$. We have 
\begin{equation}
\label{1}
0=\dd(z)=\dd(vy^{(q)})+\dd(w)=\dd(v)y^{(q)}+(-1)^{p}vl y^{(q-1)}+\dd(w)\,.
\end{equation}
Since $w\in \fm^t\kz_{p+1}\cB$ and $|w|=c$ we can write $\dd(w)=\sum_{i,j} k_iy^{(j)}$ with $k_i\in \fm^t\kz_i$, where the sum is taken over non-negative $i,j$ with $i+(r+1)j=c-1$, and $i\ge p$. Since $p+(r+1)q=c$, we must have $j<q$ for all such $j$. 
Since $1, y,y^{(2)},y^{(3)},\dots$ is a basis of $\cB$ over $\kz$, we conclude from \eqref{1} that $\dd(v)=0$, and hence $v\in Z_p(\fm^t\kz)$. 

\medskip

\noindent {\it Claim 2}:  If  $z'\in Z_{p}(\fm^t\kz)y^{(q)}$, then
$$
z'\in \dd(\fm^{t-1}\cB)+\fm^t \kz_{p+r+1}y^{(q-1)}.
$$

\medskip

\noindent {\it Proof of Claim 2}:  Let $z'=vy^{(q)}$ with $v \in Z_p(\fm^t \kz)$.  The hypothesis of the theorem gives  $v-v'l\in \dd(\fm^{t-1}\kz_{p+1})$ for some  $v'\in Z_{p-r}(\m^{t-1}\kz)$, and so $z'- v'ly^{(q)}\in \dd(\fm^{t-1}\kz_{p+1})y^{(q)}$.  An application of the Leibniz rule yields
$$z'- v'ly^{(q)}=z'-v'\dd(y^{(q+1)})=z'-(-1)^{|v'|}\dd(v'y^{(q+1)}),$$
and therefore
\begin{align*}
z'\in &  \dd(\fm^{t-1}\cB)+\dd(\fm^{t-1}\kz_{p+1})y^{(q)}\\
\subseteq & \dd(\fm^{t-1}\cB)+\dd(\fm^{t-1} \kz_{p+1}y^{(q)})+\fm^{t-1}\kz_{p+1}\dd(y^{(q)}),\text{ by the Leibniz rule,} \\ 
\subseteq &\dd(\fm^{t-1}\cB)+\fm^t \kz_{p+1+r}y^{(q-1)},
\end{align*}
which verifies Claim 2. 

Now suppose $z\in \cB$ represents a nontrivial class in $\hh_c(\m^t \cB)$ and let $p$ be the largest integer such that $z\in \fm^t\kz_p\cB$. Write $z=\sum k_iy^{(j)}$, where the sum is taken over integers $i,j$ with $i+(r+1)j=c$ and $i\ge p$, and $k_i\in \fm^t\kz_i$. Set $z'=k_py^{(\frac{c-p}{r+1})}$ and $w=z-z'$. Then $z=z'+w$ and the hypotheses of Claim 1 are satisfied.  
Putting together Claim 1 and Claim 2, we have:   
\begin{equation}
\label{2}
z\in \dd(\fm^{t-1}\cB)+z_1,\qquad \text{for some $z_1\in \fm^t\kz_{p+1}\cB$}\,.
\end{equation}
As $z_1$ is also a cycle, we may repeat the argument for $z_1$, and so on.  Inductively, we obtain $z\in \dd(\fm^{t-1}\cB)+ \fm^t\kz_i\cB$ for all $i> p$. Since $\kz_i=0$ for $i\gg0$, we get $z\in \dd(\fm^{t-1}\cB)$. 
\end{proof}

We recall below a result of Levin \cite[Lemma 2]{Lev78}. 
\begin{chunk}
\label{Levin}
Let $\mathcal F$ be a differential graded $R$-algebra and $\mathcal E$ a differential graded $\mathcal F$-module which is free as a graded $\mathcal F$-module (i.\,e. forgetting differentials) and such that $\dd(\mathcal E)\subseteq\fm\mathcal E$. Let $M$, $N$ be $R$-modules such that $\fm M\subseteq N\subseteq M$, and such that the canonical map $N\otimes _R\mathcal E\to M\otimes_R\mathcal E$ is injective. If the induced homomorphism $\HH(N\otimes_R \mathcal F)\to \HH(M\otimes_R \mathcal F)$ is zero, then so is the induced homomorphism $\HH(N\otimes_R \mathcal E)\to \HH(M\otimes_R \mathcal E)$. 
\end{chunk}

\begin{corollary}\label{tor}
If $\mathcal P_{t,1}$ holds, then the following hold: 
\begin{enumerate}[\quad\rm(1)]
\item The map
$$
\Tor_i^R(\fm^t,k)\to \Tor_i^R(\fm^{t-1},k)
$$
induced by the inclusion $\fm^t\subseteq \fm^{t-1}$ is zero for all $i\ge 0$.
\item  If $v(R)\ge t$, then the algebra $\Ext_R(k,k)$ is generated in degrees $1$ and $2$. 
\item If $t=2$, then the algebra $\Ext_R(k,k)$ is generated in degree $1$. 
\end{enumerate}
\end{corollary}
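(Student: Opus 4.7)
The overall strategy is to deduce (1) from Theorem~\ref{thm2} via Levin's lemma~\ref{Levin}, and then to use (1) together with Lemma~\ref{RS1.2} to produce Golod (hence small) canonical projections whose targets have well-understood Yoneda algebras; smallness will in turn transfer generation from the target to $R$.

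For (1), I would apply \ref{Levin} with $\cF=\cB=\kz\<y\mid \dd y=l\>$, $\cE=\cA$ the Tate resolution of $k$, $N=\fm^t$, and $M=\fm^{t-1}$. The hypotheses of \ref{Levin} are readily verified: since $\cA$ is obtained from $\cB$ by further adjunction of variables, it is free over $\cB$ as a graded module; minimality of $\cA$ gives $\dd(\cA)\subseteq \fm\cA$; and because $\cA$ is $R$-free as a graded module, the inclusion $\fm^t\subseteq\fm^{t-1}$ remains injective after tensoring with $\cA$. Because $r=1$ is odd, Theorem~\ref{thm2} yields the vanishing of $\hh(\fm^t\cB)\to\hh(\fm^{t-1}\cB)$, and \ref{Levin} transfers this to vanishing of $\hh(\fm^t\otimes_R\cA)\to\hh(\fm^{t-1}\otimes_R\cA)$. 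Since $\cA$ is a free resolution of $k$, this is (1).

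For (2), I would apply \ref{RS1.2} to the projection $\kappa\col R\to R/\fm^t$ with $P=R$ and $a=t-1$. Condition (b) is automatic, since $t\ge 2$ forces $(\fm/\fm^t)^{2(t-1)}=\fm^{2(t-1)}/\fm^t=0$. For condition (a), comparing the long exact Tor sequences associated to $0\to \fm^j\to R\to R/\fm^j\to 0$ for $j=t-1,t$ identifies, for each $i\ge 1$, the map $\Tor_i^R(R/\fm^t,k)\to\Tor_i^R(R/\fm^{t-1},k)$ with $\Tor_{i-1}^R(\fm^t,k)\to\Tor_{i-1}^R(\fm^{t-1},k)$ via the connecting homomorphism, which is an isomorphism because $\Tor_{\ge 1}^R(R,k)=0$ and the map $\Tor_0^R(\fm^j,k)\to\Tor_0^R(R,k)$ is zero. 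Hence (a) follows from (1). Consequently $\kappa$ is Golod, in particular small. The assumption $v(R)\ge t$ identifies $R/\fm^t$ with $Q/\fn^t$ for a minimal Cohen presentation $R=Q/I$, and $Q/\fn^t$ is Golod by L\"ofwall's criterion \eqref{inclusions} (the choice $u=t-1$ is valid since $t\ge 2$). Because the Yoneda algebra of a Golod ring is generated in degrees $1$ and $2$, the surjection $\Ext_{R/\fm^t}(k,k)\twoheadrightarrow\Ext_R(k,k)$ provided by smallness of $\kappa$ yields the conclusion.

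For (3), the argument runs in parallel with $t=2$ and $a=1$: condition (b) of \ref{RS1.2} is now precisely the case $t=2$ of (1), while (a) is verified exactly as above. Thus $R\to R/\fm^2$ is small, and since $(\fm/\fm^2)^2=0$, the Yoneda algebra $\Ext_{R/\fm^2}(k,k)$ is the tensor algebra on $\Hom_k(\fm/\fm^2,k)$, generated in degree $1$; smallness transfers this to $\Ext_R(k,k)$. The main technical hurdle is the passage from Theorem~\ref{thm2} to (1), where Levin's lemma is the crucial tool that lifts vanishing computed in the comparatively small DG algebra $\cB$ to vanishing in the Tate resolution, where it gives information about $\Tor_*^R(-,k)$; once (1) is available, the remaining arguments are a routine Tor-diagram chase combined with standard facts about Golod rings and rings with square-zero maximal ideal.
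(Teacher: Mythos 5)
Your argument for (1) reproduces the paper's main case but omits the case $[l]=0$, which the paper treats separately. You assert that $\cA$ ``is obtained from $\cB=\kz\<y\mid\dd(y)=l\>$ by further adjunction of variables,'' hence free over $\cB$. This is correct only when $[l]\ne 0$ in $\HH_1(\kz)$, since only then can $y$ be taken among the degree-$2$ divided-power variables of the \emph{minimal} Tate construction. But $\mathcal P_{t,1}$ permits $[l]=0$, and that case is not vacuous: with the representative $l=0$ it is the nontrivial hypothesis $Z(\fm^t\kz)\subseteq B(\fm^{t-1}\kz)$. When $[l]=0$ the algebra $\cB$ carries spurious homology in degree $2$ (the class of $y-m$, where $l=\dd(m)$) and is not a subalgebra of $\cA$, so \ref{Levin} cannot be invoked with $\cF=\cB$. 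The paper handles this case via Remark~\ref{not-minimal}: choosing $l\in\fm Z_1(\kz)$, one gets directly that $\HH_i(\fm^t\kz)\to\HH_i(\fm^{t-1}\kz)$ vanishes, and \ref{Levin} is then applied with $\cF=\kz$.

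For (2) and (3) you take a genuinely different route from the paper, which simply cites \cite{HS} and \cite[Corollary~1]{Ro}. Your derivation of Golodness (hence smallness) of $R\to R/\fm^t$ from \ref{RS1.2}, together with the dimension-shift identification of $\Tor_i^R(R/\fm^j,k)$ with $\Tor_{i-1}^R(\fm^j,k)$, is correct, and (3) then follows since $\Ext_{R/\fm^2}(k,k)$ is the free tensor algebra on $\Hom_k(\fm/\fm^2,k)$; this is a clean, self-contained reproof of Roos' criterion. However, in (2) the assertion that ``the Yoneda algebra of a Golod ring is generated in degrees $1$ and $2$'' is not a standard fact and appears to be false in general: for a Golod ring $S=Q/I$ with $Q$ regular, $\pi^{\ge 2}(S)$ is a free graded Lie algebra on $\Sigma\Tor_{\ge 1}^Q(S,k)^{\vee}$, with free generators in every degree from $2$ to $\pd_Q(S)+1$, and Golodness alone gives no surjectivity of the $\pi^1$-action needed to reach the higher-degree generators from degree $2$. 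What your argument actually requires is the specific statement that $\Ext_{Q/\fn^t}(k,k)$ is generated in degrees $1$ and $2$; this is a structural fact about $Q/\fn^t$ that needs to be either established or cited (it is precisely the role \cite{HS} plays in the paper's proof).
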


\begin{proof} 
If $l\notin \fm Z_1(\kz)$, we  can then construct a minimal Tate resolution $\mathcal A$ of $k$ over $R$ by starting with the Koszul complex K, then adjoining a variable $y$ with $\dd(y)=l$, and then the rest of the needed variables as described in \ref{tate_res}. The description of the basis of $\mathcal A$ in \cite[Remark 6.2.1]{Avr98} shows that, forgetting differentials, $\mathcal A$ is free over the algebra $\mathcal B=\kz\<y\mid \dd(y)=l\>$. Using \ref{Levin} and Theorem \ref{thm2}, we conclude that the induced map 
$$
\HH_i(\fm^t\mathcal A)\to \HH_i(\fm^{t-1}\mathcal A)
$$
is zero for all $i\ge 0$, and this yields the desired conclusion. 

If  $l\in \fm Z_1(\kz)$, then we showed in  Remark \ref{not-minimal} that the induced map $\HH_i(\fm^t\kz)\to \HH_i(\fm^{t-1}\kz)$ is zero for all $i\geq 0$, and the conclusion follows again by applying Levin's result in \ref{Levin}. 

In view of (1), part (2) follows then from \cite{HS} and part (3) from \cite[Corollary 1]{Ro}. 
\end{proof}

\begin{corollary}
\label{P-R}
Let $(Q,\fn,k)$ be a regular local ring, and $I$ an ideal with $I\subseteq \fn^2$, and consider the local ring $(R,\fm,k)$ defined by $R=Q/I$. Let $h\in I\smallsetminus \fn I$ and let $L\in \kz^Q$ such that $\dd(L)=h$. Let $l$ denote the image of $L$ in $\kz=\kz^Q\otimes_QR$. Assume that $\mathcal P_{t,1}$ holds, with $l$  as above.  

If  $P=Q/(h)$, then the induced map $\Tor_i^P(\fm^t,k)\to \Tor_i^P(\fm^{t-1},k)$ is  zero for all $i\ge 0$. 

Furthermore, let $a$ be an integer such that $I\subseteq \fn^{a+1}$. If $h\in I\smallsetminus \fn^{a+2}$ and $a+1\le t\le 2a$, then the canonical projection $P\to R$ is a Golod homomorphism. 
\end{corollary}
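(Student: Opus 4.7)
The plan for the first assertion is to lift the conclusion of Theorem~\ref{thm2} from $R$ to $P$ via Levin's transfer principle~\ref{Levin}. Since $h$ is a nonzerodivisor on the regular ring $Q$, the resolution $0\to Q\xrightarrow{h}Q\to P\to 0$ combined with the identification $H_\bullet(\kz^P)=\Tor^Q_\bullet(P,k)$ yields $H_0(\kz^P)=H_1(\kz^P)=k$, and a direct verification shows $H_1(\kz^P)$ is generated by the class of the image $L'$ of $L$ in $\kz^P=\kz^Q\otimes_Q P$. Hence the Tate resolution $\cA^P$ of $k$ over $P$ begins by adjoining a degree-$2$ divided-power variable $y$ with $\dd(y)=L'$ and then further variables to kill higher homology, so $\cA^P$ is free as a graded module over its sub-DG algebra $\kz^P\langle y\rangle$. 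Since $L'$ reduces to $l$ under $P\to R$, base change identifies $\kz^P\langle y\rangle\otimes_P R$ with the DG algebra $\cB=\kz\langle y\mid \dd y=l\rangle$ from Section~\ref{Generation_by_one_element_section}, and $\fm^j\otimes_P\kz^P\langle y\rangle\cong \fm^j\cB$ for every $j$ by freeness of $\kz^P\langle y\rangle$ over $P$. Theorem~\ref{thm2} applied with $r=1$ gives the vanishing of $H_i(\fm^t\cB)\to H_i(\fm^{t-1}\cB)$, and Levin's lemma~\ref{Levin}, applied over the base $P$ with $(\cF,\cE,M,N)=(\kz^P\langle y\rangle,\cA^P,\fm^{t-1},\fm^t)$, transfers this vanishing to the desired statement $\Tor^P_i(\fm^t,k)\to\Tor^P_i(\fm^{t-1},k)=0$; one checks that $\bar\fn\cdot\fm^{t-1}=\fm^t$ (where $\bar\fn$ denotes the maximal ideal of $P$) and that the inclusion $\fm^t\otimes_P\cA^P\hookrightarrow\fm^{t-1}\otimes_P\cA^P$ is injective by $P$-flatness of $\cA^P$.

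For the second assertion, the plan is to invoke Lemma~\ref{RS1.2} with the integer $a$ from the hypotheses. Condition~(b) follows immediately from the first assertion: the range $a+1\le t\le 2a$ provides the chain $\fm^{2a}\subseteq\fm^t\subseteq\fm^{t-1}\subseteq\fm^a$, and the middle inclusion induces the zero map on $\Tor^P$, so the composed map does too. For condition~(a), one first checks that $I\subseteq\fn^{a+1}\subseteq\fn^a$ forces $R/\fm^a=P/\bar\fn^a$, so the target of the map becomes $\Tor^P_*(P/\bar\fn^a,k)$. The hypothesis $h\in I\smallsetminus\fn^{a+2}$ yields $v(P)=a+1$ and permits choosing $L\in\fn^a\kz^Q_1$, whence $l\in\fm^a\kz$. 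One then combines Proposition~\ref{tate-prop} applied to $P$ at the level $t=a$ (giving the decomposition $\cA^P=\kz^P\cdot(\cA^P)'$ with $(\cA^P)'=\{x\in\cA^P:\dd x\in\bar\fn^a\kz^P_1\cA^P\}$) with the observation that any lift $\tilde z\in\cA^P$ of a class in $\Tor^P_i(R,k)$ satisfies $\dd\tilde z\in J\cA^P\subseteq\bar\fn^{a+1}\cA^P$, where $J=I/(h)$ is the kernel of $P\twoheadrightarrow R$. A cycle-level argument in the spirit of the proof of Theorem~\ref{thm2}, exploiting $l\in\fm^a\kz$, should then push $\tilde z$ into $\bar\fn^a\cA^P+\dd(\cA^P)$, giving the required vanishing of $\Tor^P_i(R,k)\to\Tor^P_i(R/\fm^a,k)$ for $i>0$.

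The main obstacle is condition~(a) of Lemma~\ref{RS1.2}. Whereas the first assertion concerns an inclusion of submodules and follows directly from Theorem~\ref{thm2} via Levin's principle, condition~(a) concerns a quotient map and is not a formal consequence of the first assertion. Bridging the two requires combining the structural decomposition of Proposition~\ref{tate-prop} for the hypersurface $P$ with the deep-filtration information $l\in\fm^a\kz$ coming from $h\notin\fn^{a+2}$, and the careful cycle-level manipulation needed to show that a relative-cycle representative of a class in $\Tor^P_i(R,k)$ can be absorbed into $\bar\fn^a\cA^P$ modulo an absolute boundary is the delicate technical step.
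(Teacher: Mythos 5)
Your proof of the first assertion reaches the right conclusion but takes a slightly roundabout path. You adjoin $y$ to $\kz^P$ and then appeal to Levin's lemma~\ref{Levin} to pass from $\kz^P\langle y\rangle$ to the full Tate resolution $\cA^P$. But $P=Q/(h)$ is a hypersurface, and the key simplification (which the paper uses) is that $\kz^P\langle y\rangle=\cB'\otimes_Q P$ \emph{is already} the minimal free resolution of $k$ over $P$ — no further variables are needed, so your phrase ``and then further variables to kill higher homology'' is incorrect, even though the ensuing application of \ref{Levin} with $\cF=\cE$ is vacuously valid. Once one recognizes $\cA^P=\kz^P\langle y\rangle$, the identification of $\Tor^P_i(\fm^j,k)$ with $\HH_i(\fm^j\cB)$ is immediate and Theorem~\ref{thm2} finishes the argument, no transfer principle required. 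Your argument is therefore correct but carries a spurious step and slightly misrepresents the structure of $\cA^P$.

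For the second assertion there is a genuine gap. You correctly observe that condition (b) of \ref{RS1.2} follows from the first assertion via the chain $\fm^{2a}\subseteq\fm^t\subseteq\fm^{t-1}\subseteq\fm^a$ (exactly as the paper does). But for condition (a) you propose to prove from scratch that $\Tor^P_i(R,k)\to\Tor^P_i(R/\fm^a,k)$ vanishes, using Proposition~\ref{tate-prop} for $P$, the deep-filtration fact $l\in\fm^a\kz$, and an unspecified ``cycle-level argument in the spirit of the proof of Theorem~\ref{thm2}.'' You yourself flag this as ``the delicate technical step,'' and indeed the proposal only asserts that the argument ``should then push $\tilde z$ into $\bar\fn^a\cA^P+\dd(\cA^P)$'' without actually carrying it out. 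This is precisely where the paper reaches for a citation rather than a new proof: condition (a) is exactly the content of \cite[Lemma 1.4]{RS}, which states that $I\subseteq\fn^{a+1}$ forces $\Tor^P_i(R,k)\to\Tor^P_i(R/\fm^a,k)$ to vanish for $i>0$. Without either completing the cycle-level argument or invoking that lemma, your proof of the Golod statement is incomplete.
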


\begin{remark}
The existence of a surjective Golod homomorphism from a complete intersection ring to the local ring $R$ is a rather remarkable property: Using a result of Levin recorded in  \cite[Proposition 5.18]{AKM88}, this property allows one to conclude that the Poincar\'e series of all finitely generated $R$-modules are rational, sharing a common denominator. 
\end{remark}

\begin{proof}
The hypothesis implies that  $h$ is part of a minimal generating set of $I$. 

Set $\B=\kz\<y\mid \dd(y)=l\>$ and $\B'=\kz^Q\<y\mid \dd(y)=L\>$.  Note that $\B=(\B'\otimes_QP)\otimes_PR$ and $\B'\otimes_QP$ is a minimal free resolution of $k$ over $P$.  We can identify thus the induced map   $\Tor_i^P(\fm^t,k)\to \Tor_i^P(\fm^{t-1},k)$ with the induced map  $\HH_i(\fm^t\cB)\to \HH_i(\fm^{t-1}\cB)$, and the latter is zero, by the theorem. 
Under the additional hypotheses in the last paragraph of the statement, it also follows that the induced map $\Tor_i^P(\fm^{2a},k)\to \Tor_i^P(\fm^{a},k)$ is zero for all $i\ge 0$,  since $t\le 2a$ and $t-1\ge a$. Since $I\subseteq \fn^{a+1}$, \cite[Lemma 1.4]{RS} gives that the induced map $\Tor_i^P(R,k)\to \Tor_i^P(R/\fm^{a},k)$ is zero for all positive $i$, and then \ref{RS1.2} gives the final conclusion.
\end{proof}

Compressed Gorenstein artinian local rings, and, more generally, compressed level artinian local rings are defined in terms of an extremal condition  involving the length, embedding dimension, and the socle of the ring. We refer to \cite{RS} and \cite{KSV} for the precise definitions. Such rings can be viewed as being ``generic", in a sense explained in more detail  in \cite[Theorem 3.1]{KSV}, for example. 

\begin{remark}
Let $(R, \fm, k)$ be a compressed Gorenstein local ring of socle degree $s\ne 3$ and assume $k$ is infinite. Set $t=s+2-v(R)$ and $a=v(R)-1$. 
When $s\ne 3$, the inequalities $a+1\le t\le 2a$ follow from general properties of compressed Gorenstein rings, and more precisely from the fact that  $s=2v(R)-1$ or $s=2v(R)-2$, as noted in \cite{RS}. 

The  proof of \cite[Proposition 4.6]{RS} shows that $Z_n(\kz)\subseteq lZ_{n-1}(\fm^{t-1}\kz)$ for some cycle $l\in \kz_1 $, and \cite[Lemma 4.4]{RS} shows that the induced map $\HH_i(\fm^t\kz)\to \HH_i(\fm^{t-1}\kz)$ is zero for all $i<n$, where $n$ denores the minimal number of generators of $\fm$.  It follows that the ring $R$ satisfies  $\mathcal P_{t,1}$. We can then construct $h\in I$ such that $\dd(L)=h$, where $L$ is a preimage of $l$ in $\kz^Q$.  With this data, the hypotheses of Corollary \ref{P-R} are satisfied, hence we recover the main structural result in \cite{RS} stating that $R$  is a homomorphic image of a hypersurface via a Golod homomorphism.
\end{remark}

\begin{remark}
The condition $\mathcal P_{t,1}$ is also satisfied for compressed level artinian local rings of socle degree $s=2t-1$, with $s\ne 3$. This can be seen from the proof of \cite[Lemma 6.3]{KSV} and \cite[Lemma 4.4]{KSV}. The conclusion that such rings are homomorphic images of a hypersurface via a Golod homomorphism is established there using the results of \cite{RS}. Corollary \ref{P-R} recovers the same conclusion, as well. 
\end{remark}
 
Another class of rings for which the condition $\mathcal P_{t,1}$ holds is discussed in the next section.

\section{Stretched Cohen-Macaulay local rings}
\label{stretched}

Let $(R,\fm,k)$ be an artinian local ring. We set 
$$v=\rank_k(\fm/\fm^2); \qquad e=\text{length}(R); \qquad r=\rank_k(0\colon \fm); \qquad h=e-v\,.$$ 
Note that  $\fm^{h+1}=0$. The ring $R$ is said to be {\it stretched}  if $h$ is the least integer $i$ such that $\fm^{i+1}=0$. Assume further that $R$ is not a field. Then $R$ is stretched if and only if $\fm^2$ is principal. Sally \cite{Sally80} computed the series $\Po_k^R(t)$  for a stretched artinian local ring $R$ as follows: 
\begin{equation}
\label{P-stretched}
\Po_k^R(t)=\begin{cases} {1}/{(1-vt)} &  \text{if $r=v$};\\
{1}/{(1-vt+t^2)}& \text{if $r\ne v$}.
\end{cases}
\end{equation}
In this section we show that finitely generated modules over $R$ have rational Poincar\'e series as well, sharing a common denominator. The main result is Theorem \ref{main-s}; its proof involves an application of the results in Section \ref{Generation_by_one_element_section}.

\begin{bfchunk}{Structure of stretched artinian local rings.}
\label{structure}
Assume that $(R,\fm,k)$ is  a stretched artinian local ring, not a field. We further set 
$$p=v-r\quad \text{and}\quad q=r-1\,.$$

Assume that $h\ge 3$.  (The case $h\le 2$ has been treated in \cite{AIS08} and will be recalled later.)  As described in \cite{Sally80}, we choose elements $t, z_1, \dots, z_{p}, w_1, \dots, w_{q}$ forming a minimal generating system of $\fm$ such that the following hold: 
\begin{enumerate}
\item $\fm^i=(t^i)$ for all $i\ge 2$;  
\item The elements $w_1, \dots, w_{r-1}, t^{h}$ form a basis of $(0\colon \fm)$; in particular, $tw_j=0$ and $z_iw_j=0$ for all $i$, $j$ with $1\le i\le p$ and $1\le j\le q$; 
\item $tz_i=0$ for all $i$ with $1\le i\le p$;
\item $z_iz_j=a_{ij}t^h$, with $a_{ij}=0$ or $a_{ij}$ a unit of $R$, for all  $i$, $j$ with $1\le i, j\le p$. 
\end{enumerate}
(Note that there are no elements $w_i$ if $r=1$ and there are no elements $z_i$ if $r=v$.)

If $r\ne v$, let $\ov{a_{ij}}$ denote the image of $a_{ij}$ in $k=R/\fm$ and note that the matrix $(\ov{a_{ij}})$ is invertible. Indeed, if this matrix is not invertible, then there exists an element $z=\sum_{i=1}^pb_iz_i$, with $b_i\in R$ such that $b_i$ is a unit for at least one index $i$, and such that $zz_j=0$ for all $j$ for all $1\le j\le p$.  This implies that $z\in (0\colon \fm)$, hence $z\in (w_1, \dots, w_{r-1}, t^h)$, contradicting the fact that $t, z_1, \dots, z_{p}, w_1, \dots, w_{q}$ is a minimal generating set of $\fm$. 

The fact that the matrix $(\ov{a_{ij}})$ is invertible also implies that for every $i$  with $1\le i\le p$ there exists an element $y_i\in (z_1,\dots, z_p)$ such that 
\begin{equation}
\label{zt1}
z_iy_i=t^h\quad\text{and}\quad z_jy_i=0\quad\text{for all $j$ with $i\ne j$, $1\le j\le p$}\,.
\end{equation} 
Since $y_i\in (z_1, \dots, z_p)$ we also have
\begin{equation}
\label{zt2}
ty_i=0=w_jy_i\quad \text{for all $i$, $j$ with $1\le i\le p$ and $1\le j\le q$}\,.
\end{equation}
\end{bfchunk}

\begin{bfchunk}{Structure of Koszul homology.} 
\label{structure-K} Let $R$ be as in \ref{structure}. We  consider the Koszul complex  $K^R$ on the set  $\{w_1, \dots, w_q, z_1, \dots, z_p, t\}$. As a DG algebra it can be described as the complex $$K^R=R\langle W_1, \dots, W_q, Z_1, \dots, Z_p, T\rangle$$ with $\dd(W_i)= w_i$, $\dd(Z_i)=z_i$ and $\dd(T)= t$.

 In what follows, we will consider products of some of the variables $W_i$ or $Z_i$,  with the index $i$  ranging over certain sets $I$. We adopt the convention that the  product is equal to $1$ if $I=\emptyset$; for example $Z_1\cdots Z_p=1$ if $p=0$. We set
$$
\mathcal W=\{W_{j_1}W_{j_2}\cdots W_{j_i}\mid 1\le j_1<j_2<\dots<j_i\le q, \,0\le  i\le q\}
$$
where $W_{j_1}W_{j_2}\cdots W_{j_i}=1$ when $i=0$. 

\begin{Lemma}
Let $s\ge 1$. Every cycle of $Z_s(\fm^2K^R)$ has the form 
$$\sum a_{W}t^hTZ_1\cdots Z_pW+V,\qquad \text{
with $V\in B_s(\fm K^R)$}\quad\text{and}\quad a_W\in R,
$$
where  the sum ranges over all $W\in \mathcal W$ with $|W|=s-p-1$. 
\end{Lemma}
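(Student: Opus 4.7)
The plan is to decompose any cycle $v\in Z_s(\fm^2K^R)$ using a $k$-basis of $\fm^2$, and then reduce it modulo $B_s(\fm K^R)$ by subtracting off carefully chosen boundaries until only the asserted terms remain. The central observation driving every step is that in a stretched ring the relations $tz_i=0$, $tw_j=0$, and $t^{h+1}=0$ cause nearly all cross terms to vanish after multiplication by a power of $t$.

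Since $\fm^j=(t^j)$ for $j\ge 2$, we have $\fm^2=kt^2\oplus kt^3\oplus\cdots\oplus kt^h$, so I would write $v=\sum_{i=2}^h t^iv_i$ with each $v_i$ a $k$-linear combination of exterior basis monomials $T^\epsilon Z_A W_B$ of $K^R$ of degree $s$. For any such monomial $\mu$ and any $i\ge 1$, the vanishing relations above force $t^i\dd(\mu)=0$ unless $\mu=TZ_AW_B$ involves $T$, in which case $t^i\dd(TZ_AW_B)=t^{i+1}Z_AW_B$. Using the $k$-linear independence of $\{t^3,\ldots,t^h\}$ in $R$ together with $t^{h+1}=0$, the condition $\dd(v)=0$ then forces $v_i$ to contain no $T$-monomial for $2\le i\le h-1$, while $v_h$ remains unconstrained.

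I would next exploit the identity $\dd(t^{i-1}T\xi)=t^i\xi$ valid for any basis monomial $\xi$ of $K^R$ without $T$ and any $i\ge 2$ (the cross term $t^{i-1}T\dd(\xi)$ vanishes by the same relations, since $\dd(\xi)$ is a sum of terms with coefficient $z_j$ or $w_k$). As $t^{i-1}\in\fm$ when $i\ge 2$, this identifies every term $t^iZ_AW_B$ (no $T$) appearing in $v$ as an element of $B_s(\fm K^R)$. Subtracting these boundaries reduces $v$ modulo $B_s(\fm K^R)$ to a sum of the form $t^h\sum_{A,B}a_{A,B}TZ_AW_B$.

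The main obstacle is eliminating $t^hTZ_AW_B$ whenever $A\subsetneq\{1,\ldots,p\}$, and this is where I would use the auxiliary elements $y_{i_0}$ furnished by \eqref{zt1} and \eqref{zt2}. Given such an $A$, pick $i_0\in\{1,\ldots,p\}\setminus A$ and compute $\dd(y_{i_0}TZ_{A\cup\{i_0\}}W_B)$: the $T$-derivative contribution $y_{i_0}tZ_{A\cup\{i_0\}}W_B$ vanishes by $ty_{i_0}=0$; the $W_k$-derivative contributions vanish by $w_ky_{i_0}=0$; and among the $Z_j$-derivative contributions, only $j=i_0$ survives by $z_jy_{i_0}=0$ for $j\ne i_0$, yielding $\pm y_{i_0}z_{i_0}TZ_AW_B=\pm t^hTZ_AW_B$. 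Since $y_{i_0}\in(z_1,\ldots,z_p)\subseteq\fm$, this places $t^hTZ_AW_B$ in $B_s(\fm K^R)$. Iterating this reduction over all $A\ne\{1,\ldots,p\}$ appearing in the expression from the previous paragraph leaves precisely the terms $t^hTZ_1\cdots Z_pW$ with $W\in\mathcal W$ and $|W|=s-p-1$, which is the asserted normal form (with the convention that when $s<p+1$ the sum is empty, i.e.\ $v\in B_s(\fm K^R)$).
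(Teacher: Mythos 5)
Your proof is correct and follows essentially the same approach as the paper: decompose via the $t$-power description of $\fm^2$, observe that $T$-free terms are boundaries of the form $\dd(t^{i-1}T\xi)$, use $\dd(v)=0$ to constrain the remaining coefficients to a $t^h$-multiple, and eliminate $t^hTZ_AW_B$ with $A\subsetneq\{1,\dots,p\}$ via the auxiliary elements $y_i$. The paper reduces to $T$-terms first (using $t^2L=\dd(tTL)$) and then applies the cycle condition to show $a_L\in(0\colon t^3)$, while you apply the cycle condition first (to each $t^i$-layer) and subtract boundaries afterward; this is a reordering of the same moves, not a different argument.
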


\begin{proof}
If $L=Z_{i_1}Z_{i_2}\cdots Z_{i_m}W_{j_1}W_{j_2}\cdots W_{j_n}$ with $1\le i_1<i_2<\cdots<i_m\le p$ and $1\le j_1<j_2<\cdots<j_n\le q$,  then the Leibniz rule and the fact that $ t w_j=0=t z_i$ for all  $1\le i\le p$ and $1\le j\le q$ imply that $t\dd(L)=0$. Consequently, another application of the Leibniz rule gives
$$
 t^2L=\dd( tTL)\,.
$$
Since $\fm^2=(t^2)$, it suffices to consider cycles of the form  $z=\sum_L a_Lt^2TL$ with $L$ ranging over all possible products $L$ as above (if $p=0=q$, then we only have one term where $L=1$). 
Taking differentials, we have 
$$
0=\dd(z)=\sum a_Lt^3L 
$$
where the last equality follows again from the Leibniz rule and the fact that $t\dd(L)=0$, as noted above.  Since the products $L$ are linearly independent over $R$, we have $a_Lt^3=0$ for all $L$, and hence $a_L=t^{h-2}b_L+c_L$, with $b_L\in R$ and $c_L\in  (w_1, \dots, w_q, z_1, \dots, z_p)$.  We have thus $z=\sum_L b_Lt^hTL$. 

Consider a cycle $t^hTL$, with $L$ as above. Set $Z=Z_{i_1}Z_{i_2}\cdots Z_{i_m}$ and $W=W_{j_1}W_{j_2}\cdots W_{j_n}$, so that $t^hTL=t^hTZW$. We will show that $t^hTZW\in \dd(\fm K^R)$ whenever $Z\ne Z_1Z_2\cdots Z_p$.  

Indeed, assume  $Z\ne Z_1Z_2\cdots Z_p$. In particular, we have $p\ne 0$. Without loss of generality, we may assume $Z_1$ is not a factor in $Z$. Then for any such $Z$ and any $W$ as above we have: 
$$
t^h TZW=z_1y_1TZW=\dd(y_1Z_1TZW)\\
$$
where the element $y_1$ is as defined above, and the last equality follows from the Leibniz rule, in view of the relations in \eqref{zt1} and \eqref{zt2}. 
\end{proof}
\end{bfchunk}

\begin{bfchunk}{Generation of Koszul homology.}
\label{generation}
We continue with the notation and hypotheses of \ref{structure} and \ref{structure-K}. In addition, we assume $r\ne v$, hence $p\ne 0$.  We set:  
$$
 \mathcal J_1=\{(i,j)\mid 1\le i\le j\le p, a_{ij}\ne 0\}\quad \text{and}\quad \mathcal J_2=\{(i,j)\mid 1\le i\le j\le p, a_{ij}= 0\}\,.
$$
Consider the following element in $K_1^R$: 
\begin{align}
\label{F}F=\sum_{1\le  i< j\le q}&\alpha_{ij} w_iW_j+\sum_{1\le j\le q, 1\le i\le p}\beta_{ij}w_jZ_i+\sum_{1\le i\le q}\gamma_i w_iT+\\
&+\sum_{1\le i\le p}\delta_itZ_i+\sum_{(i,j)\in\mathcal J_1} \eta_{ij}( t^{h-1}T-a_{ij}^{-1}z_iZ_j)+\sum_{(i,j)\in\mathcal J_2}\theta_{ij}z_iZ_j\nonumber
\end{align}
where  $\alpha_{ij}$, $\beta_{ij}$, $\gamma_i$, $\delta_i$, $\eta_{ij}$, $\theta_{ij}\in  R$. Recall that the elements $a_{ij}$ were introduced in \ref{structure}. The relations in \ref{structure} yield that $F$ is a cycle.

\begin{Lemma}
Assume in addition that at least one of the coefficients $\delta_i$, $\eta_{ij}$, or $\theta_{ij}$ is a unit.
If $z$ is a cycle in  $\fm^2K^R$, then $z=AF+A'$ for some  $A\in Z(\fm K^R)$ and $A'\in B(\fm K^R)$. In other words, condition $\mathcal P_{2,1}$ holds with $l=F$. 
\end{Lemma}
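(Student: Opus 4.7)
My plan is to use the previous lemma to reduce to a generating set and then, depending on which coefficient of $F$ is a unit, exhibit an explicit cycle that realises the condition $\mathcal P_{2,1}$ when multiplied by $F$.

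By the lemma just established, every cycle $z \in Z_s(\fm^2 K^R)$ has the form $\sum_W a_W\, t^h T Z_1\cdots Z_p W + V$ with $V \in B(\fm K^R)$ and $W$ ranging over elements of $\mathcal W$ of the appropriate degree. Since the desired conclusion is $\mathbb{Z}$-linear in $z$, it suffices to produce, for each such $W$, a cycle $A_W \in Z(\fm K^R)$ satisfying $A_W F \equiv (\text{unit})\cdot t^h T Z_1\cdots Z_p W \pmod{B(\fm K^R)}$. Throughout, the computations use repeatedly the defining relations from \ref{structure}: $tw_j = tz_i = 0$, $w_jz_i = w_jw_{j'} = 0$, $z_iz_j = a_{ij}t^h$, $t^{h+1} = 0$, together with $y_iz_j = \delta_{ij}t^h$ and $ty_i = w_jy_i = 0$ from \eqref{zt1}--\eqref{zt2}. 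These conspire to kill almost every product, leaving only the unit coefficient's contribution.

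Case 1 ($\eta_{ij}$ or $\theta_{ij}$ is a unit). Consider $A_W^{(0)} = y_i T\cdot Z_1\cdots \widehat{Z_j}\cdots Z_p W$. Since $ty_i = 0$ and $y_i w = 0$, the only term of $F$ which contributes nontrivially to $A_W^{(0)} F$ is the $z_iZ_j$-term, which yields $\pm \eta_{ij}a_{ij}^{-1}\,t^h T Z_1\cdots Z_p W$ (respectively $\pm \theta_{ij}\,t^h T Z_1\cdots Z_p W$), because $y_iz_k = t^h\delta_{ik}$ and $Z_l^2=0$ forces $l=j$. If $i=j$, a direct check shows $A_W^{(0)}$ is already a cycle. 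If $i\neq j$, then $\dd(A_W^{(0)}) = \mp\, t^h T Z_1\cdots \widehat{Z_i}\cdots \widehat{Z_j}\cdots Z_p W$, and $B_W = y_jT\cdot Z_1\cdots\widehat{Z_i}\cdots Z_pW$ has the same differential; moreover $B_W F = 0$ since, in the sum defining $F$, any contributing index pair would have to be $(j,i)$ with $j > i$, which is excluded from $\mathcal J_1 \cup \mathcal J_2$. Hence $A_W = A_W^{(0)} - B_W$ (with signs adjusted) is the required cycle.

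Case 2 ($\delta_i$ is a unit). Since $(\bar a_{kl})$ is invertible, row $i$ is nonzero, so we may pick $k$ with $a_{ik}\neq 0$ (choosing $k=i$ if possible). The element $c_i = t^{h-1}T - a_{ik}^{-1}z_iZ_k$ is a cycle in $K^R_1$, and we set $A_W^{(0)} = c_i\cdot Z_1\cdots\widehat{Z_i}\cdots Z_pW$. The identity $tc_i = t^hT$ gives $A_W^{(0)}\cdot \delta_i tZ_i = \pm\,\delta_i\, t^hTZ_1\cdots Z_pW$; every other term of $F$ contributes $0$, either because $t^{h-1}z_{?} = 0$ or because the resulting $Z_j$-factor duplicates one already present. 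To turn $A_W^{(0)}$ into a cycle, one observes $\dd(A_W^{(0)})$ is an $R$-linear combination of elements of the form $t^h\,Z_1\cdots\widehat{Z_{k'}}\cdots Z_pW$, each of which is $\pm\dd(y_{k'}Z_1\cdots Z_pW)$. The correction $B_W$ that arises lies in $\fm K^R$ and, crucially, satisfies $B_W F = 0$: the factor $Z_1\cdots Z_p$ in $y_{k'}Z_1\cdots Z_pW$ annihilates every $Z_j$-factor contributed by $F$, while $ty_{k'}=w_jy_{k'}=0$ eliminates the remaining terms. Thus $A_W = A_W^{(0)} - B_W$ is a cycle in $\fm K^R$ with $A_W F = \pm\delta_i\, t^hTZ_1\cdots Z_pW$.

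The main obstacle is administrative rather than conceptual: one must track signs and confirm that the correction terms $B_W$ are genuinely annihilated by $F$. The latter is the pivotal input — it rests on the observation that multiplying the maximal product $Z_1\cdots Z_p$ by any further $Z_j$ vanishes, which forces most cross-terms to die, combined with $t^h\fm = 0$. Once this is in place, the three cases combine to show that $\mathcal P_{2,1}$ holds with $l = F$.
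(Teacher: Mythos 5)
Your proposal is correct (modulo the sign bookkeeping you acknowledge) and follows the same strategy as the paper's proof: both use Lemma \ref{structure-K} to reduce to cycles of the form $t^hTZ_1\cdots Z_pW$, and then construct an explicit cycle in $\fm K^R$ whose product with $F$ recovers this generator up to a unit. The only distinction is organizational — the paper assembles a single parametrized cycle $C$ with free coefficients $d_{ij},b_i$, computes $FC=\alpha\, t^hTZ_1\cdots Z_pW$ in one shot, and then chooses the parameters to make $\alpha$ a unit; your case analysis on which of $\delta_i,\eta_{ij},\theta_{ij}$ is a unit simply selects one summand (or matched pair) from that $C$ at a time, and your cycles $A_W^{(0)}-B_W$ are precisely the corresponding terms in the paper's expression.
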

\begin{proof}

Using Lemma \ref{structure-K}, we see that it suffices to consider cycles of the form $t^hTZ_1\cdots Z_pW$, with $W\in\mathcal W$. Now consider the following element, which can be seen to be a cycle in $\fm K^R$: 
\begin{align*}
C&=\sum_{1\le i<j\le p}d_{ij}\left (y_iTZ_1\cdots \widehat Z_j\cdots Z_pW+(-1)^{j-i}y_jTZ_1\cdots \widehat Z_i\cdots Z_pW\right )+\\&+\sum_{1\le i\le p}d_{ii}(y_iTZ_1\cdots \widehat Z_i\cdots Z_pW)+\sum_{1\le i\le p}b_{i}\left (t^{h-1}TZ_1\cdots \widehat Z_i\cdots Z_pW+(-1)^iy_iZ_1\cdots Z_pW\right )\end{align*}
with $d_{ij}, b_i\in R$, where the notation $\widehat Z_j$ indicates that the element $Z_j$ is missing from the product. Set 
$$\alpha=\sum_{1\le i\le p}(-1)^ib_i\delta_i-\sum_{(i,j)\in\mathcal J_1} (-1)^jd_{ij}\eta_{ij}a_{ij}^{-1}+\sum_{(i,j)\in\mathcal J_2}(-1)^jd_{ij} \theta_{ij}\,.$$ 
Then we have
$$
FC=\alpha(t^hTZ_1\cdots Z_pW).
$$
Since at least one of the coefficients $\delta_i$, $\eta_{ij}$, or $\theta_{ij}$   is a unit, we can choose the coefficients $d_{ij}$ and $b_i$ such that $\alpha$ is a unit. Then, if we take $A=\alpha^{-1}C$, we have $t^hTZ_1\cdots Z_pW=AF$. 
\end{proof} 
\end{bfchunk}

\begin{theorem} 
\label{main-s}
Let $(R,\fm,k)$ be a stretched artinian local ring with minimal Cohen presentation $R=Q/I$, where $(Q,\fn,k)$ is a regular local ring and $I\subseteq \fn^2$. Assume $R$ is not a field.   

Set $v=\rank_k(\fm/\fm^2)$ and $r=\rank_k(0\colon \fm)$.  The following hold: 
\begin{enumerate}[\quad\rm(1)]
\item If $r\ne v$, then $I\not\subseteq  \fn(I:\fn)$ and we have: 
\begin{enumerate}[\rm(a)]
\item If $\fm^3\ne 0$, then the induced homomorphism $Q/(f)\to R$ is Golod for all $f\in I\smallsetminus \fn(I:\fn)$.
\item The algebra $\Ext_R(k,k)$ is generated by its elements of degree $1$.
\item $(1+t)^v(1-vt+t^2)\Po_M^R(t)\in\mathbb Z[t]$ for all finitely generated $R$-modules $M$. 
\end{enumerate}
\item If $r=v$, then
\begin{enumerate}[\rm(a)]
\item $R$ is a Golod ring. 
\item The algebra $\Ext_R(k,k)$ is generated by its elements of degree $1$ and $2$.
\item $(1-vt)\Po_M^R(t)\in\mathbb Z[t]$ for all finitely generated $R$-modules $M$. 
\end{enumerate}
\end{enumerate}
\end{theorem}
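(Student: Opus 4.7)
The proof splits into the two cases $r \ne v$ and $r = v$, using the explicit structure of stretched artinian local rings developed in \ref{structure}--\ref{structure-K}. Throughout, we may reduce to $h \ge 3$ because the case $h \le 2$ is handled in \cite{AIS08} and will be cited directly for both cases.

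For Case (1), the central idea is to verify the condition $\mathcal P_{2,1}$ via the cycle $F$ constructed in \ref{generation}. The first step is to show $I \not\subseteq \fn(I:\fn)$ by producing, from the structural data of \ref{structure}, an element of $I$ which is a lift of a cycle of the form $F$ with at least one of the coefficients $\delta_i, \eta_{ij}, \theta_{ij}$ a unit; the hypothesis $r \ne v$ (which gives $p \ge 1$) is what makes such relations available. Next, given any $f \in I \setminus \fn(I:\fn)$, I would lift $f$ to a 1-chain $L \in \kz^Q$ with $\dd(L) = f$ and verify that its image $l$ in $\kz^R$ is (up to a cycle in $\fm Z_1(\kz^R)$, whose contribution is absorbed by Remark \ref{not-minimal}) of the form $F$ satisfying the unit-coefficient hypothesis of the lemma in \ref{generation}. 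This delivers $\mathcal P_{2,1}$ with $l$ coming from $f$. Part (a) then follows from Corollary \ref{P-R} with $t = 2$ and $a = 1$ (the inequalities $a+1 \le t \le 2a$ are satisfied, and $h = f \notin \fn^3$ because $f \notin \fn(I:\fn) \supseteq \fn^3$). Part (b) follows immediately from Corollary \ref{tor}(3). For part (c), the Golod homomorphism $P = Q/(f) \to R$ with $P$ a hypersurface, combined with Levin's result cited in the Remark after Corollary \ref{P-R}, gives rationality of $\Po_M^R(t)$ for every finitely generated $M$, with a common denominator. Computing this denominator from $\Po_k^P(t) = (1+t)^v/(1-t^2)$ and Sally's formula $\Po_k^R(t) = 1/(1-vt+t^2)$ via the Golod relation $\Po_k^R(t) = \Po_k^P(t)/\bigl(1-t(\Po_R^P(t)-1)\bigr)$ yields exactly $(1+t)^v(1-vt+t^2)$.

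For Case (2), I would first establish (a) that $R$ is Golod. When $r = v$ there are no $z_i$'s: the generators are $t$ together with socle elements $w_1, \dots, w_{v-1}$, so all products among the generators are either trivial or a power of $t$. Using the structural information in \ref{structure} and \ref{structure-K}, one can check directly that the Koszul homology algebra $\hh^R$ has trivial multiplication, or equivalently that Sally's formula $\Po_k^R(t) = 1/(1-vt)$ agrees with the Serre upper bound $(1+t)^v/(1-t(\hh^R(t)-1))$, which is the Golod condition. Parts (b) and (c) then follow from standard facts about Golod rings: the Yoneda algebra $\Ext_R(k,k)$ of a Golod ring is a free product, hence generated in degrees $1$ and $2$, proving (b); and Levin's result applied to the Golod homomorphism $Q \to R$ gives $\Po_M^R(t) \bigl(1 - t(\hh^R(t) - 1)\bigr) = \Po_M^Q(t) \in \mathbb Z[t]$ for every finitely generated $M$. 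It remains to observe that, in this case, the polynomial $1 - t(\hh^R(t)-1)$ factors as $(1+t)^v(1-vt)$ and that the factor $(1+t)^v$ divides each $\Po_M^Q(t)$ (for example by viewing $V := (w_1,\dots,w_{v-1})$ as an $R$-submodule with $\fm V = 0$, so $V$ is a direct sum of copies of $k$, and exploiting the short exact sequence $0 \to V \to R \to k[t]/(t^{h+1}) \to 0$ to reduce the denominator to $1-vt$), giving (c).

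\textbf{Main obstacle.} I expect the principal difficulty to lie in Case (1)(a): bridging the algebraic condition $f \in I \setminus \fn(I:\fn)$ with the specific cycle shape required by the lemma in \ref{generation}. One must show both that the non-inclusion $I \not\subseteq \fn(I:\fn)$ actually holds and that every permissible choice of $f$ produces a lift whose image in $\kz^R$ satisfies the unit-coefficient hypothesis, which is a delicate bookkeeping of the relation types ($\delta_i$, $\eta_{ij}$, $\theta_{ij}$) in terms of $f$. A secondary but lighter obstacle is the cancellation in (2)(c) reducing the generic Golod denominator $(1+t)^v(1-vt)$ to the sharper $(1-vt)$; this requires exploiting the specific near-trivial module structure of the socle part of $R$ when $r = v$.
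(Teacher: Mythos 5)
Your treatment of Case (1) follows the paper's proof essentially verbatim: compute $I$ and $\fn(I:\fn)$ from the structure theorem of \ref{structure}, produce a cycle $F$ of the form \eqref{F} with a unit coefficient from an $f\in I\smallsetminus\fn(I:\fn)$, invoke Lemma \ref{generation} to get $\mathcal P_{2,1}$, and then Corollaries \ref{P-R} and \ref{tor} plus Levin's result from \cite{AKM88} for parts (a)--(c); the $\fm^3=0$ case via \cite{AIS08} is also what the paper does (Remark \ref{m^3}).

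Case (2) is where you diverge from the paper, and there is a genuine gap in your argument for (2)(b). You assert that because $R$ is Golod, ``the Yoneda algebra $\Ext_R(k,k)$ is a free product, hence generated in degrees $1$ and $2$.'' This is false as stated: for a Golod ring $R=Q/I$ of embedding dimension $v$, $\Ext_R(k,k)$ is the coproduct of $\Ext_Q(k,k)$ with a free (tensor) algebra whose generating space sits in cohomological degrees $i+1$ for each $i\ge 1$ with $\Tor^Q_i(R,k)\ne 0$; in general this runs up to degree $v+1$, not $2$. Golodness alone therefore does \emph{not} control the degrees of generation of the Yoneda algebra. The paper sidesteps this by observing that when $r=v$ the ring is a fiber product $R\cong A\times_k B$ with $A$ a hypersurface and $\mathfrak b^2=0$ in $B$, so that Lescot's theorem gives Golodness of $R$ (part (a)), and, crucially, Moore's theorem identifies $\Ext_R(k,k)$ as the \emph{coproduct of $\Ext_A(k,k)$ and $\Ext_B(k,k)$} --- a different and sharper free-product decomposition --- from which generation in degrees $1$ and $2$ follows because each factor is generated in those degrees. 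Without the fiber-product structure, your argument for (2)(b) does not go through.

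For (2)(c) you propose a short exact sequence $0\to V\to R\to k[t]/(t^{h+1})\to 0$ with $V\cong k^{v-1}$ to ``reduce the denominator to $1-vt$,'' but this is left vague and it is not clear how to extract a uniform denominator for arbitrary $M$ from it. The paper instead applies the Dress--Kr\"amer results \cite{DK} for fiber products: $\Omega^R_2(M)$ decomposes as $K\oplus L$ with $K$ an $A$-module and $L$ a $B$-module, and an explicit formula expresses $\Po^R_M(t)$ in terms of $\Po^A_K$, $\Po^B_L$, $\Po^A_k$, $\Po^B_k$; since every $A$-module has eventually periodic resolution (denominator $1-t$) and every $B$-module has denominator $1-(v-1)t$, one gets $1-vt$ after simplification. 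If you wish to avoid Dress--Kr\"amer you would need to supply a comparable mechanism; the fiber-product structure is doing essential work in both (2)(b) and (2)(c) and is the key idea you are missing in this half of the argument.
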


\begin{remark}
\label{m^3}
If $R$ is a stretched artinian local ring with $\fm^3=0$ and $r\ne v$, then the hypotheses of \cite[Theorem 4.1]{AIS08} apply. In view of \cite[Theorem 1.4]{AIS08}, we have that, after a faithfully flat extension, there exists a regular local ring $(Q,\fn,k)$,  an element $u\in \fn$, and a Golod surjective homomorphism $Q/(u^2)\to R$. In this case, as noted in \cite{AIS08}, the ring $R$ is Koszul, in the sense that the associated graded ring with respect to $\fm$ is a Koszul algebra, implying in particular part (1b) of the theorem.  In this case,  one has $(1-vt+t^2)\Po_M^R(t)\in\mathbb Z[t]$ for all finitely generated $R$-modules $M$ \! \cite[Theorem 1.1]{AIS08}, and in particular (1c) holds. 
\end{remark}

\begin{proof}
 Choose $\wt w_i$, $\wt t$, $\wt z_i$   preimages of the elements $w_i$, $t$, $z_i$ in $Q$. Let $K^R$ be the Koszul complex described earlier, and let $K^Q$ be the Koszul complex over $Q$ on the set $\{\wt w_1, \dots, \wt w_q, \wt z_1, \dots, \wt z_p, \wt t\}$, and note that $K^R=K^Q\otimes_QR$, hence we can  identify $K^R$ with $K^Q/IK^Q$. 

(1)  Assume $r\ne v$ and $\fm^3\ne 0$.  In this case, the ideal $I$ is generated by the elements 
\begin{gather*}
\wt w_j\wt w_l,\quad\, \wt  w_j\wt z_i,\quad\,  \wt w_j\wt t,\quad\,  \wt z_i\wt t \quad \text{with $1\le j\le l\le q$ and $1\le i\le p$}\,;
\\ \text{$\wt t^h-\wt a_{ij}^{-1}\wt z_i\wt z_j$ with $(i,j)\in \mathcal J_1$}; \quad\, \text{$\wt z_i\wt z_j$ with $(i,j)\in \mathcal J_2$}\,.
\end{gather*}
We also have
$$
(I\colon \fn)=(\wt w_1, \dots, \wt w_q, \wt t^h)+I\quad \text{and}\quad  \fn(I\colon \fn)=\fn (\wt w_1, \dots, \wt w_q)+\fn I.
$$
(Note that, if $R$ is Gorenstein, then $q=0$ and $\fn(I\colon \fn)=\fn I$. ) In particular, note that $I\ne \fn(I\colon \fn)$.

Let $f\in  I\smallsetminus \fn (I\colon \fn)$. Then we can choose elements  $\alpha_{ij}$, $\beta_{ij}$, $\gamma_i$, $\delta_i$, $\eta_{ij}$, $\theta_{ij}\in  R$ in \eqref{F} such that at least one of the elements $\delta_i$, $\eta_{ij}$, or $\theta_{ij}$ is a unit, and such that we can lift the element $F$ in \eqref{F} to an element $L$ in $K^Q$ with  $\dd(L)=f$. 
Then Lemma \ref{generation} shows that the ring $R$ satisfies $\mathcal P_{2,1}$, with $l=F$. Part (1a) follows then from Corollary \ref{P-R} and (1b) follows from Corollary \ref{tor}.  Furthermore, the existence of a surjective Golod homomorphism onto $R$ from a hypersurface ring, together with  formula \eqref{P-stretched} and  a result of Levin, see \cite[Proposition 5.18]{AKM88},  prove (1c). 

 Remark \ref{m^3} explains the statements (1b) and (1c) when $\fm^3=0$. 

(2) Assume now that $r=v$, hence $p=0$. We see that the ideal $I$ is then generated by the elements
$$
\wt w_j\wt w_l, \quad\,  \wt w_j\wt t, \,\quad\, \wt t^{h+1} \quad \text{with $1\le j\le l\le q$ }\,.
$$
We set $A=R/(w_1, \dots, w_q)$ and $B=R/(t)$. Note that $(A, \mathfrak a, k)$ is a local ring with $A\cong Q'/(\tau^{h+1})$, where $Q'$ is the regular local ring $Q'=Q/(\wt w_1, \dots, \wt w_q)$ and $\tau$ is the image of $\wt t$ in $Q'$; in particular, $A$ is a hypersurface. Also, note that $(B, \mathfrak b, k)$ is a local ring whose maximal ideal satisfies $\mathfrak b^2=0$. Then $R$ is isomorphic to the fiber product $A\times_kB$. Since $A$ and $B$ are both Golod rings, \cite[Th\'eor\`eme 4.1]{Lescot} implies that $R$ is a Golod ring, establishing (2a).  Also, by \cite{Moore}, the algebra $\Ext_R(k,k)$ is then the coproduct of the algebras $\Ext_A(k,k)$ and $\Ext_B(k,k)$; see {\it loc.\! cit.\!} for the definition of the coproduct.  Since  $\mathfrak b^2=0$, the Yoneda algebra $\Ext_B(k,k)$ is generated by its elements of degree $1$. The Yoneda algebra of the hypersurface $A$ is generated in degrees $1$ and $2$. It follows that the coproduct is generated in degrees $1$ and $2$, establishing (2b).

By a result of Ghione and Gulliksen \cite{GG75}  and formula \ref{P-stretched}, the fact that $R$ is Golod implies that $\po Mt$ is rational, with denominator $(1+t)^v(1-vt)$. The more precise denominator $1-vt$  in (2c) requires additional discussion.  To compute $\po Mt$ we use a method employed in the proof of \cite[Corollary 4.4]{AIS08}. 
By \cite[Rem.~3]{DK} one has $\Omega^R_2(M)=K\oplus L$, where $K$ is
an $A$-module and $L$ is a $B$-module.  Using further a formula in \cite[Thm.~2]{DK}, we have
   \begin{align}
\label{fiber}
\po Mt-\beta^R_0(M)-\beta^R_1(M)\cdot t
&=\po Kt\cdot t^2+\po Lt\cdot t^2
   \\
&=\frac{\po[A]{K}t\cdot\po[B]kt+\po[A]kt\cdot\po[B]Lt}
{\po[A]kt+\po[B]kt-\po[A]kt\cdot\po[B]kt}\cdot t^2\nonumber.
   \end{align}
Since $A$ is a hypersurface, we have  that $\po[A]kt=1/(1-t)$, and the Poincar\'e series of every finitely generated $A$-module can be written as a rational function with denominator $1-t$, since every $A$-module $M$ has an eventually periodic resolution. Since $\mathfrak b^2=0$ and  $\rank_k(\mathfrak b/\mathfrak b^2)=v-1$, we have that  $\po[B]kt=1/(1-(v-1)t)$ and the Poincar\'e series of every finitely generated $B$-module can be written as a rational function with denominator $1-(v-1)t$. 
Plugging these formulas into \eqref{fiber} we obtain that $\po Mt$ can be written as a fraction with denominator $1-vt$, establishing (2c). 
\end{proof}

If $(R,\fm,k)$ is a $d$-dimensional local Cohen-Macaulay ring of multiplicity $e$, then $R$ is said to be {\it stretched} if there exists a minimal reduction $\bd x=x_1, \dots, x_d$ of $\fm$ (that is, there exist $d$ elements $x_1, \dots, x_d$ of $\fm$ such that $\fm^{r+1}=(x_1, \dots, x_d)\fm^r$ for some non-negative integer $r$) such that $R/(\bd x)$ is stretched. Standard arguments allow us to reduce computations of Poincar\'e series over $R$ to computations over the stretched artinian local ring  $R/(x_1, \dots, x_d)$, and we obtain:

\begin{corollary}
Let $(R,\fm)$ be a  $d$-dimensional stretched local Cohen-Macaulay ring of type $r$.  Set $\text{mult}(R)=e$ and $\rank_k(\fm/\fm^2)=v$. There exists then a polynomial $d_R(t)\in \mathbb Z[t]$ such that $d_R(t)\Po_M^R(t)\in\mathbb Z[t]$ for all finitely generated $R$-modules $M$, where
$$
d_R(t)=\begin{cases}1-(v-d)t &  \text{if $r=v-d$};\\
(1+t)^{v-d}(1-(v-d)t+t^2) & \text{if $r\ne v-d$}.
\end{cases}
$$
\end{corollary}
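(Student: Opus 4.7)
The plan is to reduce to the artinian case already handled in Theorem \ref{main-s} by quotienting by a minimal reduction. After replacing $R$ by the faithfully flat extension $R(u):=R[u]_{\fm R[u]}$ if necessary, we may assume that $k$ is infinite; this replacement does not affect any of the invariants $v$, $r$, $d$, or the Poincar\'e series of any module. With $k$ infinite, choose a minimal reduction $\bd x=x_1,\dots,x_d$ of $\fm$ for which $R/(\bd x)$ is stretched and whose elements form part of a minimal generating set of $\fm$. Since $R$ is Cohen-Macaulay, $\bd x$ is a regular sequence, and $\bar R:=R/(\bd x)$ is a stretched artinian local ring whose invariants are $\bar v:=\rank_k(\bar\fm/\bar\fm^2)=v-d$ and $\rank_k(0\colon_{\bar R}\bar\fm)=r$, the latter because type is preserved under quotients by regular sequences over Cohen-Macaulay rings.

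Applying Theorem \ref{main-s} to $\bar R$ then produces a polynomial, equal to $1-\bar vt$ when $r=\bar v$ and to $(1+t)^{\bar v}(1-\bar vt+t^2)$ otherwise, that serves as a common denominator for the Poincar\'e series of every finitely generated $\bar R$-module. Under the identifications $\bar v=v-d$ and $\bar r=r$ this is exactly the polynomial $d_R(t)$ in the statement.

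To transfer this conclusion from $\bar R$ back to $R$, given a finitely generated $R$-module $M$, I would pick $n\ge d-\depth_R M$; iterated applications of the depth lemma to the syzygy short exact sequences $0\to\syz{i+1}{M}\to F_i\to\syz{i}{M}\to 0$ show that $\Omega:=\syz{n}{M}$ is a maximal Cohen-Macaulay $R$-module, so $\bd x$ is regular on $\Omega$. Consequently the minimal $R$-free resolution of $\Omega$ stays minimal upon applying $-\otimes_R\bar R$ and becomes the minimal $\bar R$-free resolution of $\bar\Omega:=\Omega/\bd x\Omega$; in particular $\Po^R_\Omega(t)=\Po^{\bar R}_{\bar\Omega}(t)$. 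Combined with the tautology
$$\Po_M^R(t)=\sum_{i=0}^{n-1}\rank_k\Tor^R_i(M,k)\,t^i+t^n\,\Po^R_\Omega(t),$$
this yields $d_R(t)\cdot\Po_M^R(t)\in\mathbb Z[t]$, as desired. No new substantive obstacle arises beyond Theorem \ref{main-s}; the only items requiring care are the verifications $\bar v=v-d$ and $\bar r=r$, together with the standard syzygy reduction to the maximal Cohen-Macaulay case.
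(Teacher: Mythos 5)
Your proof is correct and fills in exactly the ``standard arguments'' the paper alludes to: reduce to the artinian quotient $\bar R=R/(\bd x)$ by a minimal reduction, verify $\bar v=v-d$ and that the type is preserved, apply Theorem~\ref{main-s} to $\bar R$, and transfer back via a high enough syzygy $\Omega$ of $M$ (which is maximal Cohen--Macaulay, so $\bd x$ is $\Omega$-regular and the minimal $R$-free resolution of $\Omega$ reduces mod $\bd x$ to the minimal $\bar R$-free resolution of $\bar\Omega$). The only cosmetic quibble is that the passage to an infinite residue field is superfluous here, since the hypothesis that $R$ is stretched Cohen--Macaulay already supplies the required minimal reduction; otherwise the argument matches the intended one.
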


\section{Graded rings and Koszul algebras}
\label{graded}  

We give here graded versions of the main statements. While our results have been stated so far for local rings, they can be stated similarly in the case that $R=Q/I$, with $Q=k[x_1, \dots, x_n]$ with $k$ a field and $I$ a homogeneous ideal, and with $\fm$ denoting the irrelevant ideal $(x_1, \dots, x_n)$. Furthermore, in the graded case, the hypotheses can be formulated into more suggestive language, as we shall point out below, and in particular we obtain the applications to the study of the Koszul property mentioned in the introduction. 

Let $R$ be as above, and let $\kz$ denote the Koszul complex on the images of the variables.  Let $\hh$ denote the homology. Note that $\kz$ and  $\hh$ are bigraded algebras. When we say that an element of $\kz$ or $\hh$ has bidegree $(i,j)$, the entry $i$ denotes homological degree and the entry $j$ denotes internal degree.  If $[z]\in \hh_{i,j}$ is nonzero  we set 
$$d(z)=j-i.$$
  When looking at the Betti table of the resolution of $R$ over $Q$ given by Macaulay2 \cite{M2}, which can be interpreted as also describing the graded Hilbert series of $\hh$, the information $d(z)=r$ indicates that the element $[z]$ lies in the $r$th line (strand) of the table.

With this terminology, we can restate conditions $\mathcal Z_{t,b,s}$ and $\mathcal P_{t,r}$ as follows:
\medskip

\begin{enumerate}
\item[$\mathcal P_{t,r}$:]There exists $[l]\in \hz_r$ such that for every $[z]\in \hz$ with $d(z)\ge t$, there exists $[z']\in \hz$ with $d(z')\ge t-1$ such that $[z]=[z'][l]$.  
\end{enumerate}

\begin{enumerate}
\item[$\mathcal Z_{t,b,s}$:]There exists a set of cycles $Z$ in $\kz$ with $d(z)\ge t$  for all $z\in Z$ and $zz'=0$ for all $z,z'\in Z$, and such that for every $[v]\in \HH$ with $d(v)=s$ there exists $m\in \mathbb N$ and $z_i \in Z$, $[u_i]\in \HH$ with $d(z_i)\ge b$ for each $1\le i\le m$, such that $[v]=\sum_{i=1}^m[z_i][u_i]$. 
\end{enumerate}

We now  concentrate on the consequences of our results to the study of the Koszul property of $R$.  As recalled earlier, \cite[Corollary 1]{Ro} shows that the map $\Tor_i^R(\fm^2,k)\to \Tor_i^R(\fm,k)$ induced by the inclusion $\fm^2\subseteq \fm$ is zero for all $i\ge 0$ if and only if the Yoneda algebra $\Ext_R(k,k)$ is generated in degree $1$. Since $R$ is a standard graded $k$-algebra, the last statement is equivalent to the fact that $R$ is a Koszul algebra. 

We also consider a property that is stronger than Koszulness. As defined in \cite{IR}, a local (or graded) ring is said to be {\it absolutely Koszul} if the linearity defect of every finitely generated $R$-module is finite. While we refer to \cite{HI} for the original  definition of linearity defect, we mention that a module $M$ has finite linearity defect if and only if it has a syzygy $N$ whose associated graded module $\gr_{\fm}(N)$  has a linear resolution over the associated graded ring $\gr_{\fm}(R)$.  In the graded case, if $R$ is absolutely Koszul, then it is also Koszul, see \cite[Proposition 1.13]{HI}.

As defined  in the introduction, the {\it linear strand} of $\hh$ is the set of elements $[z]$ with $d(z)=1$. The {\it nonlinear strands} are  composed of those elements  with  $d(z)>1$.   We say that the nonlinear strands of $\hh$ are {\it generated by a set} $\ov Z$ with $\ov Z\subseteq \hh$ if the nonlinear strands are contained in the ideal generated by $\ov Z$ in $\hh$. If the nonlinear strands are generated by a subset $\ov Z$ of the linear strand, it follows that $\hh$ is generated by the linear strand as a $k$-algebra.

\begin{theorem}
\label{graded-thm}
Assume one of the following conditions holds: 
\begin{enumerate}[\quad\rm(1)]
\item  There exists an element $[l]$ of bidegree $(1,2)$ such that the nonlinear strands are generated by $[l]$, that is,  every element  in the nonlinear strands of $\hz$ is a multiple of $[l]$.  
\item $R_{\ges 3}=0$ and there exists a set of cycles $Z$  representing elements in the linear strand, with the property that $zz'=0$ for all $z, z'\in Z$, such that the set $\ov Z=\{[z]\mid z\in Z\}$ generates the nonlinear strand of $\hh$. 
\end{enumerate}
Then $R$ is Koszul. Moreover, $R$ is absolutely Koszul when {\rm (1)}  holds. 
\end{theorem}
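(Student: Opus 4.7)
The plan is to verify, in the graded reformulations, the condition $\mathcal P_{2,1}$ in part (1) and the condition $\mathcal Z_{1,1,2}$ in part (2), and then apply Corollary \ref{tor} and Theorem \ref{thm1} respectively. A common convenience is that $R$ being standard graded with $I\subseteq \fn^2$ forces $v(R)\ge 2$ automatically, so the numerical hypotheses on $v(R)$ in those results are met.

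For (1), if $[l]$ has bidegree $(1,2)$ then $d(l)=1$ and any representing cycle lies in $Z(\fm\kz)$. Given $[z]\in\hz$ with $d(z)\ge 2$, the hypothesis writes $[z]=[z'][l]$ for some $[z']\in \hz$; additivity of $d$ on nonzero products gives $d(z')\ge 1$, and lifting to the cycle level produces precisely the boundary relation required by $\mathcal P_{2,1}$. Corollary \ref{tor}(3) then yields that $\Ext_R(k,k)$ is generated in degree $1$, so $R$ is Koszul. To upgrade to absolute Koszulness, I would invoke Corollary \ref{P-R}: a preimage $L\in \kz^Q$ of $l$ in homological degree $1$ has internal degree $2$, so $h=\dd(L)$ lies in $I\cap Q_2\subseteq I\setminus \fn^3$, and $h\ne 0$ since $[l]\ne 0$ and $Q$ is regular. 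The inequalities $a+1\le t\le 2a$ with $a=1$ and $t=2$ hold as equalities, producing a Golod surjection $P=Q/(h)\to R$ from the quadric hypersurface $P$; absolute Koszulness of $R$ then descends through such surjections (see \cite{HI}, \cite{IR}).

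For (2), $R_{\ges 3}=0$ forces $\fm^3=0$, so $s=2$ satisfies $\fm^{s+1}=0$, and it simultaneously forces the nonlinear strand of $\hz$ to be concentrated in the single value $d=2$. Take $t=1$ and $b=1$; then $s-t\le b\le s-1$ and $v(R)\ge t+1$ both hold. Each $z\in Z$ has $d(z)=1=t$ and $zz'=0$ by assumption, while any $[v]\in\hz$ with $d(v)=2$ is expressed by the generation hypothesis as $[v]=\sum[z_i][u_i]$ with $z_i\in Z$ and $[u_i]\in\hz$ of $d$-value necessarily equal to $1\ge b$. This verifies $\mathcal Z_{1,1,2}$, so Theorem \ref{thm1} forces $\Tor_i^R(\fm^2,k)\to\Tor_i^R(\fm,k)$ to vanish for all $i$, and Roos's criterion (as used in Corollary \ref{tor}(3)) concludes that $\Ext_R(k,k)$ is generated in degree $1$, hence $R$ is Koszul.

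The main obstacle is the absolute Koszulness assertion in (1): the Koszul conclusions in both parts follow almost mechanically once the bigraded generation hypotheses are translated into the filtration conditions $\mathcal P_{2,1}$ and $\mathcal Z_{1,1,2}$, but passing from Koszulness to absolute Koszulness requires pulling the property back along the Golod surjection $P\to R$, which relies on external machinery for absolute Koszulness of quadric hypersurfaces and its descent. A separate treatment of the degenerate subcase $[l]=0$, in which the nonlinear strand of $\hz$ vanishes outright, should also be flagged: here $\mathcal P_{2,1}$ is trivially satisfied and Corollary \ref{tor}(3) still delivers Koszulness, while absolute Koszulness follows from the same preservation result applied to the hypersurface obtained from any minimal quadric in $I$.
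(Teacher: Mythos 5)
Your proof follows essentially the same route as the paper: translate the bigraded generation hypotheses into $\mathcal P_{2,1}$ (for part (1)) and $\mathcal Z_{1,1,2}$ (for part (2)), invoke Corollary~\ref{tor} and Theorem~\ref{thm1} to get vanishing of $\Tor_i^R(\fm^2,k)\to\Tor_i^R(\fm,k)$, and deduce Koszulness; then, for absolute Koszulness in (1), observe the defining ideal is quadratic, lift $l$ to $h\in I\smallsetminus \fn^3$, apply Corollary~\ref{P-R} with $a=1,t=2$ to obtain a Golod surjection from a quadric hypersurface, and cite the Herzog--Iyengar result (the paper points to \cite[Theorem 5.9]{HI}). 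Your parameter choices and the additivity-of-$d$ argument match the paper's reasoning exactly; the extra remark about the degenerate case $[l]=0$ is harmless but not needed.
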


\begin{proof}
If (1) holds, then $\mathcal P_{2,1}$ holds. By Corollary \ref{tor}, the induced maps $\Tor_*^R(\fm^2,k)\to \Tor_*^R(\fm,k)$ are zero. This implies that $R$ is Koszul. 

One of the consequences of the hypothesis in (1) is that there are no elements in bidegree $(1,i)$ with $i>2$. Consequently, the ideal $I$ is quadratic and $v(R)=2$. Let $L$ denote a preimage of $l$ in $\kz^Q$ and set $h=\dd(L)$. Note that $h\in I\smallsetminus \fn^3$. We can apply then Corollary \ref{P-R} with $a=1$ and $t=2$ to conclude that $R$ is a homomorphic image of a  quadratic hypersurface via a Golod homomorphism, hence $R$ is absolutely Koszul by \cite[Theorem 5.9]{HI}. 

If (2) holds, then $\mathcal Z_{1,1,2}$ holds. We apply then the graded version of Theorem  \ref{thm1} to conclude that the induced map $\Tor_i^R(\fm^2,k)\to \Tor_i^R(\fm,k)$ is zero, hence $R$ is Koszul. 
\end{proof}

There exist Koszul algebras that do not satisfy either of the conditions of the theorem, since, as noted  in \cite{Boocher-others}, the fact that $R$ is Koszul does not necessarily imply that the Koszul homology is generated by the linear strand.  On the other hand, the fact that $R$ is Koszul does impose conditions on the Koszul homology; see the introduction for more details on known results.  Of particular interest is the following reformulation of  Lemma \ref{r} from Section \ref{background_section}. The statement of this result was communicated to us orally by S.~Iyengar, who arrived at it in work with L.~Avramov and A.~Conca. 

\begin{proposition}\label{MHK}
If $R$ is Koszul, then the nonlinear strands of $\hh$ are contained in the set of matric Massey products MH$(\kz)$. 
\end{proposition}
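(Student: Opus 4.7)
The plan is to invoke Lemma~\ref{r} with $i=2$, which reduces the proposition to two things: (i) the canonical projection $\pi\colon R\to R/\fm^2$ is small, and (ii) every nonlinear class in $\hh$ admits a representative in $\fm^2\kz$. Once (i) and (ii) are in hand, Lemma~\ref{r} gives the inclusion $\HH_{\ge 1}(\fm^2\kz)\subseteq MH(\kz)$, and (ii) says the nonlinear strands land inside $\HH_{\ge 1}(\fm^2\kz)$ (note that $\HH_0(\kz)=k$ sits in bidegree $(0,0)$, so no nonlinear class has homological degree $0$), yielding the claim.

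For (ii), I would use that, because $R$ is standard graded, $\fm^2=R_{\ges 2}$. The Koszul complex $\kz$ is free over $R$ on the monomials $T_I$ in the degree-one exterior variables, each of bidegree $(|I|,|I|)$. An element $z\in\kz_{i,j}$ therefore has the form $\sum r_I T_I$ with $r_I\in R_{j-i}$. When $d(z)=j-i\ge 2$ the coefficients $r_I$ all lie in $R_{\ges 2}=\fm^2$, so $z\in\fm^2\kz$. Hence every nonlinear cycle represents a class in the image of $\HH(\fm^2\kz)\to\HH(\kz)$.

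For (i), the Koszul assumption says exactly that the Yoneda algebra $\Ext_R(k,k)$ is generated as a $k$-algebra in degree $1$. The map
\[
\Ext^{1}_{\pi}(k,k)\col \Ext^{1}_{R/\fm^2}(k,k)\lra \Ext^{1}_R(k,k)
\]
is an isomorphism, since both sides are naturally identified with $\Hom_k(\fm/\fm^2,k)$ via the projection $\fm/\fm^2\xrightarrow{\cong}\overline\fm/\overline\fm^2=\overline\fm$. Because $\Ext_{\pi}^*(k,k)$ is a homomorphism of graded $k$-algebras whose target is generated in degree $1$, surjectivity in degree $1$ forces surjectivity in all degrees; thus $\pi$ is small.

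The only real content is verifying smallness, and the degree-one comparison just described makes that step straightforward; I anticipate no substantial obstacle. Combining the three steps gives the proposition.
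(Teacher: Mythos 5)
Your proof is correct and rests on the same pivot as the paper's: apply Lemma~\ref{r} with $i=2$, so that the task reduces to showing the projection $\pi\colon R\to R/\fm^2$ is small and that nonlinear classes come from $\fm^2\kz$. Where you diverge is in how smallness is established. The paper uses Roos's characterization (Koszul $\Leftrightarrow$ the map $\Tor_*^R(\fm^2,k)\to\Tor_*^R(\fm,k)$ vanishes), concludes from this and \ref{RS1.2} that $\pi$ is Golod, and then invokes Avramov's result that Golod homomorphisms are small. You instead argue directly at the level of the Yoneda algebra: $\Ext^1_\pi(k,k)$ is an isomorphism (both sides being $\Hom_k(\fm/\fm^2,k)$), and since Koszulness means $\Ext_R(k,k)$ is generated in degree $1$, surjectivity propagates from degree $1$ to all degrees. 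That is a genuinely more elementary route to smallness, avoiding the Golod machinery entirely. You also spell out, via the bigraded structure of $\kz$, why a cycle $z$ with $d(z)\ge 2$ automatically lies in $\fm^2\kz=R_{\ges 2}\kz$ — a point the paper treats as implicit. Both approaches are sound; yours is somewhat more self-contained for this particular proposition, while the paper's route through Golodness aligns with the broader toolkit used in the rest of the section.
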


\begin{proof}
If $R$ is Koszul, then the induced map $\Tor_*^R(\fm^2,k)\to \Tor_*^R(\fm,k)$ is zero, hence the canonical projection $R\to R/\fm^2$ is Golod, and thus small. Apply then Lemma \ref{r} to see that $\HH_{\ge 1}(\fm^2 \kz)\subseteq MH(\kz)$.
\end{proof}

In the next section, we discuss an example of a graded algebra  $R$ with $R_{\ges 3}=0$ for which the nonlinear strand is generated by the linear strand, but $R$ is not Koszul, see \ref{ring_not_Koszul}. Thus, the converse of Proposition \ref{MHK} does not hold. Stronger  hypotheses on the generation of Koszul homology such as the ones in our theorem  are thus needed in order to ensure $R$ is Koszul.

\section{Examples}
\label{examples}

We now proceed to give the relevant examples mentioned above. The computations here are done with the help of the Macaulay2 package \texttt{DGAlgebras} written by Frank Moore, but all computations can also be checked by hand (for example, see \ref{4socle} below). In this section, $k$ denotes a field of characteristic $0$. 

Let $Q=k[X,Y,Z,U]$, and $R=Q/\mathfrak{a}$ for an ideal $\mathfrak{a}$. Set $\mathfrak{m}=(x,y,z,u)$, with $x,y,z,u$ the images of $X,Y,Z,U$ in $R$, respectively, and $T_1,...,T_4$ the degree one variables of the Tate complex mapping to $x,y,z,u$, respectively.  As before, $\kz^R$ is the Koszul complex on $(x,y,z,u)$ and $\hz^R$ is its homology algebra. If $z\in \kz^R$ is a cycle, we denote by $[z]$ its homology class in $\hz^R$. Recall (e.g., \cite{Roo16}) that $\hz^R\cong \Tor^Q(R,k)$, and so the Betti table of $R$ over $Q$ gives an indication in which bidegrees elements of $\hz^R$ reside.

We start by applying Theorem \ref{graded-thm} to rings having the form $R=Q/\mathfrak{a}$ where $\a$ is an ideal generated by quadratic forms in $(X,Y,Z,U)$; such rings were studied by Roos in \cite{Roo16}. We verify the conditions of Theorem \ref{graded-thm} hold for 40+2=42 of the 104 rings in \cite[Tables A-D]{Roo16}, showing in particular that 40 of them are absolutely Koszul.

\begin{chunk}
\label{ring_one_gen}
For our first example, consider the ring $R$ which is case 66 in \cite[Table C]{Roo16}: 
\begin{align*}
R=k[X,Y,Z,U]/(XZ,Y^2,YU,Z^2,ZU,U^2).
\end{align*}
We claim that the nonlinear strand of $\hz^R$ is generated by a single element in bidegree $(1,2)$, i.e., it satisfies Theorem \ref{graded-thm}(1). As a result, we can conclude that this ring is absolutely Koszul, hence Koszul. To do this, we use the Macaulay2 package {\em DGAlgebras}.

In the following code, the set \texttt{C} is a list of cycles whose images (given by \texttt{G}) generate \texttt{H} ($=\hz^R$) as a $k$-algebra; we use the list of generators not in the linear strand, \texttt{P}, to define an ideal \texttt{I} that represents the nonlinear strand of \texttt{H}. Showing generation of the nonlinear strand by an element in bidegree $(1,2)$ now reduces to checking ideal containment.

\begin{quote}{\footnotesize\begin{verbatim}
needsPackage "DGAlgebras"
R=QQ[x,y,z,u]/ideal(x*z,y^2,y*u,z^2,z*u,u^2)
K=koszulComplexDGA(R)
C=getGenerators(K)
H=HH K
G=generators H
P={}; for n from 0 to length(G)-1 do {
    if (degree G_n)_0+1 !=(degree G_n)_1 then P=append(P,G_n)}
I=(ideal G)^2+(ideal P) --I is the ideal of nonlinear strands of H
m=0; for n from 1 to length(G) do { if degree X_n =={1,2} then m=m+1 else continue}
M=sum(m, j-> G_j)
N=sum(m, j-> C_j)
J=ideal(M)
isSubset(I,J) -- returns true if the nonlinear strands are generated by M
\end{verbatim}
}\end{quote}
In this example, we see that the homology class of the cycle $N=zT_1+(y+u)T_2+(z+u)T_3+uT_4$ generates the nonlinear strand of $\hz^R$, hence by Theorem \ref{graded-thm}(1), $R$ is absolutely Koszul.

Moreover, a similar argument shows that 40 of the rings from \cite[Tables A-D]{Roo16} have the nonlinear strand of their Koszul homology algebra generated by a single element in bidegree $(1,2)$, and hence are absolutely Koszul rings; these are cases 1-4, 8-10, 23, 25-28, 49, 50, 52, 53, 66-68, 70, 72, 75-83, and isotopes 46va, 66v5, 68v, 71v4, 72v2e, 75v2, 78v1, 78v2e, 78v3v, and 81va from \cite[Tables A-D]{Roo16}. Indeed, in all these cases we show that the homology class of the sum of generating cycles in bidegree $(1,2)$ found by \texttt{DGAlgebras} can be taken to be such a generator.  
\end{chunk}

The next example is a ring satisfying condition (2), but not condition (1), of Theorem \ref{graded-thm}.
\begin{chunk}
\label{ring_gen_by_set} 
Let $R$ be the ring which is case 54 in \cite{Roo16}:
\begin{align*}
R=k[X,Y,Z,U]/(X^2,XZ,Y^2,Z^2,YU+ZU,U^2).
\end{align*}
The nonlinear strand of $\hz^R$ cannot be generated by a single element in bidegree $(1,2)$ as in the previous example, which can be seen by computing the Betti table for $R$ over $Q$:
\begin{verbatim}
            0 1  2  3 4
     total: 1 6 13 12 4
         0: 1 .  .  . .
         1: . 6  4  . .
         2: . .  9 12 4
\end{verbatim}
and observing there is no way for an element in bidegree $(4,6)$ to be a multiple of an element in bidegree $(1,2)$.

Next, we use the package \texttt{DGAlgebras} again to show that the nonlinear strand of $\hz^R$ is generated by a finite set of classes of cycles with trivial self-multiplication, that is, the ring $R$ satisfies condition (2) of Theorem \ref{graded-thm}. 
\begin{quote}{\footnotesize\begin{verbatim}
R=QQ[x,y,z,u]/ideal(x^2,x*z,y^2,z^2,y*u+z*u,u^2)
m=ideal vars R; m^3==0
betti res(ideal R)
K=koszulComplexDGA(R)
C=getGenerators(K)
H=HH K
G=generators H
P={}; for n from 0 to length(G)-1 do {
    if (degree G_n)_0+1 !=(degree G_n)_1 then P=append(P,G_n)}
I=(ideal G)^2+(ideal P) --I is the ideal of nonlinear strands of H
Cyc = {C_0,C_2,C_3,C_6,C_7}
Cls = {G_0,G_2,G_3,G_6,G_7}
for m from 0 to length(Cyc)-1 do {
    for n from 0 to length(Cyc)-1 do {
        if Cyc_m*Cyc_n==0 then TrivMult=true else {TrivMult=false; break} }}
print TrivMult --returns true if Cyc has trivial self-multiplication
J=ideal Cls
isSubset(I,J) --returns true if the nonlinear strands are generated by Cls
\end{verbatim}
}\end{quote}
In this example, the set \texttt{Cyc} is the desired generating set contained in the linear strand and its elements  correspond to the following cycles: 
$$\{xT_1,zT_3,zT_1,zT_1T_3,xT_1T_3\}.$$
The Macaulay2 code first checks $\m^3=0$, and then verifies that all the products of elements in \texttt{Cyc} are 0 and that the nonlinear strand \texttt{I} is contained in the ideal generated by \texttt{Cls} (the images in \texttt{H} of cycles in \texttt{Cyc}).
A similar argument shows that the ring in case 71 of \cite[Tables A-D]{Roo16} also satisfies Theorem \ref{graded-thm}(2); for that ring, one can show that the set \texttt{Cyc=}$\{C_0,C_1,C_2,C_7,C_8,C_9,C_{11},C_{15}\}$ is the desired generating set with trivial self-multiplication. 
\end{chunk}

\begin{remark}
Even among other rings with $\m^3=0$ in \cite{Roo16}, there are limitations to Theorem \ref{graded-thm}(2): For example, the ring in case 71v16 of \cite{Roo16} has no generating set satisfying this condition, despite being Koszul. Using \texttt{DGAlgebras} as above, we see that the ring
\begin{align*}
R=k[X,Y,Z,U]/(X^2,Y^2+Z^2,XY,YZ,ZU,XZ+U^2,XU),
\end{align*}
has $\m^3=0$ and has $H^R$ generated by $X_1,...,X_{17}$ such that $X_{13}^2\not=0$, hence the cycle corresponding to $X_{13}$ cannot be a part of any set with trivial self-multiplication. However, without $X_{13}$, we cannot generate the nonlinear strand:
\begin{quote}{\footnotesize\begin{verbatim}
R=QQ[x,y,z,u]/ideal(x^2,y^2+z^2,x*y,y*z,z*u,x*z+u^2,x*u)
G=generators HH koszulComplexDGA(R)
X_13*X_13==0 -- returns false
P={}; for n from 0 to length(G)-1 do {
    if (degree G_n)_0+1 !=(degree G_n)_1 then P=append(P,G_n)}
I=(ideal G)^2+(ideal P) --I is the ideal of nonlinear strands of H
isSubset(I,ideal delete(G_12,G)) -- returns false
\end{verbatim}
}\end{quote}
Hence no set satisfying the conditions of Theorem \ref{graded-thm}(2) can exist for this ring, but $R$ is Koszul by \cite[Main Theorem]{Roo16}.
\end{remark}

The next example shows that generation of $\hz^R$ by the linear strand alone cannot detect Koszulness of $R$. 
\begin{chunk}
\label{ring_not_Koszul} 
Consider the ring which is case 55 in \cite{Roo16}:
\begin{align*}
R=k[X,Y,Z,U]/(X^2+XY,XZ+YU,XU,Y^2,Z^2,ZU+U^2).
\end{align*}
The graded Betti table of $R$ over $Q$ is the same as for the ring in \ref{ring_gen_by_set}. Moreover, we see that $\hz^R$ is generated by the linear strand. This code can also be used to show the ring in \ref{ring_gen_by_set} is generated by its linear strand.
\begin{quote}{\footnotesize\begin{verbatim}
R=QQ[x,y,z,u]/ideal(x^2+x*y,x*z+y*u,x*u,y^2,z^2,z*u+u^2)
betti res(ideal R)
H=HH koszulComplexDGA(R)
G=generators H
for n from 0 to length(G)-1 list degree G_n
betti res(coker vars R, LengthLimit =>7)
\end{verbatim}
}\end{quote}
We see that $\hz^R$ has 6 generators in bidegree $(1,2)$ and 4 generators in bidegree $(2,3)$, all in the linear strand; further, the resolution of $k$ over $R$ is not linear, hence $R$ is not Koszul.
\end{chunk}

\begin{remark}
The Koszul homology algebras of the rings in \ref{ring_gen_by_set} and \ref{ring_not_Koszul} share the same Hilbert series and are both generated by the linear strand, yet one ring is Koszul and the other one is not. Thus,  generation by the linear strand and ``good" Hilbert series of the Koszul homology are not sufficient to decide  whether the ring is Koszul. The particularities of the generation of certain nonlinear strands in the Koszul homology seem to be relevant factors in detecting good homological behavior. 
\end{remark}

We now showcase the applicability of our results with a non-quadratic example: a non-compressed level algebra of socle degree $4$, with defining ideal generated in degree $3$.

\begin{chunk}
\label{4socle}
Let $Q=k[A,B,C,D]$, and let $I$ be the ideal 
$$(A^3, A^2C, A^2D, AC^2, B^3, B^2C, B^2D, BC^2, BD^2, C^2D, AB^2+CD^2, ABD-C^3, BCD+D^3),$$
and set $R=Q/I$. Write $\fm$ for the maximal homogeneous ideal of $R$ and write $a, b, c,d$ for the images of the variables.  
 As before we denote by $T_1, \ldots, T_4$ the variables of the Tate complex in degree one, mapping to $a,b,c,d$. 
One computes with Macaulay2 the Betti table of $R$ over $Q$: 

\begin{verbatim}
            0  1  2  3 4
     total: 1 13 22 12 2
         0: 1  .  .  . .
         1: .  .  .  . .
         2: . 13 19  5 .
         3: .  .  3  6 .
         4: .  .  .  1 2
\end{verbatim}

\begin{Proposition}
\label{4socle-prop}
The following hold: 
\begin{enumerate}[\quad\rm(1)]
\item The last strand of the Koszul homology $\hh$ is generated by one element in $\hh_{1,3}$. 
\item $R\to R/\fm^4$ is a Golod homomorphism and $R/\m^4$ is a Golod ring. 
\item $P^R_k(t)={\displaystyle \frac{(1+z)^3}{1-z-12z^2-10z^3-z^4+2z^5}}$.
\end{enumerate}
\end{Proposition}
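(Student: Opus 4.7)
My plan is to establish Part (1) by exhibiting an explicit Koszul cycle, and then to feed it into Corollary \ref{formula} (which packages Theorem \ref{thm1} together with Lemma \ref{lem_poincare_rational}) to obtain Parts (2) and (3) in tandem.

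For Part (1): The nonzero positions in the last strand of $\hh^R$ are $\hh_{3,7}$ (one class) and $\hh_{4,8}$ (two classes), as read off from the Betti table. Multiplication by a class $[l]\in\hh_{1,3}$ shifts bidegree by $(1,3)$, so it carries $\hh_{2,4}$ into $\hh_{3,7}$ and $\hh_{3,5}$ into $\hh_{4,8}$. I would choose $l$ to be the Koszul cycle $\sum_i c_i T_i\in \kz_1^R$ attached to a carefully selected cubic generator $\sum_i c_i X_i$ of $I$, and then verify with the Macaulay2 package \texttt{DGAlgebras}, in the same spirit as Examples \ref{ring_one_gen}--\ref{ring_gen_by_set}, that the two multiplication maps $\hh_{2,4}\to \hh_{3,7}$ and $\hh_{3,5}\to\hh_{4,8}$ induced by $[l]$ are surjective. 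The mixed cubic generators $AB^2+CD^2$, $ABD-C^3$, and $BCD+D^3$ are the most promising candidates to try first, since their associated syzygies mix all four variables.

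For Parts (2) and (3): apply Corollary \ref{formula} with $t=2$, $b=2$, $s=4$. The arithmetic hypotheses are straightforward: $v(R)=3$, because every generator of $I$ has degree $3$ and no generator lies in $\fn^4$, so $v(R)=t+1$; moreover $s=2t$ and $b=t$. For the condition $\mathcal Z_{2,2,4}$ take $Z=\{l\}$. Then $l\in Z(\fm^2\kz^R)$ because $d(l)=2$; $l^2=0$ by graded commutativity, since $l$ has odd homological degree and $k$ has characteristic zero; and $\fm^5=0$ forces every element of $\fm^4\kz^R$ to be a cycle. The required boundary condition $v-lu\in B(\fm^3\kz^R)$ follows automatically in the graded setting from internal-degree bookkeeping: for homogeneous $v$ of homological degree $i$ with $d(v)\geq 4$, any $w$ with $\dd(w)=v-lu$ has homological degree $i+1$ and the same internal degree as $v$, hence $d(w)\geq 3$. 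So Part (1) supplies the full $\mathcal Z_{2,2,4}$ input for Corollary \ref{formula}, whose proof records Part (2): by Theorem \ref{thm1} together with its subsequent corollary, $R\to R/\fm^4$ is a Golod homomorphism (hence small), and by Remark \ref{rem} applied to $s=2t$, the ring $R/\fm^4$ is Golod.

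For the explicit computation in Part (3): one has $n=\edim R=4$; the Hilbert series of $R$, read off from the Betti table, is $H_R(z)=1+4z+10z^2+7z^3+2z^4$, giving $a=\dim_k\fm^4=\dim_k R_4=2$; and the Hilbert series $\hh^{R/\fm^4}(z)$ of the Koszul homology algebra of $R/\fm^4$ is a degree-$4$ polynomial, computable as $1+15z+30z^2+23z^3+7z^4$ from the Betti numbers of $R/\fm^4=Q/(I+\fn^4)$ over $Q$ in Macaulay2. Substituting into \eqref{Golod-form} produces $(1+z)^4/D(z)$, and a direct expansion gives $D(z)=(1+z)\bigl(1-z-12z^2-10z^3-z^4+2z^5\bigr)$; cancelling the common factor $(1+z)$ yields the stated formula. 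The principal obstacle is Part (1): selecting the correct cubic generator among the thirteen so that the associated class in $\hh_{1,3}$ generates all three classes of the last strand. This is a finite linear-algebra check best carried out computationally, paralleling the methodology of Section \ref{examples}.
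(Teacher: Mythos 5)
Your plan matches the paper's proof in structure: explicit verification of Part (1) via the Koszul homology, then Corollary \ref{formula} with $t=b=2$, $s=4$ to get Parts (2) and (3), and the same Hilbert-series data ($n=4$, $a=2$, $\hh^{R/\fm^4}(z) = 1+15z+30z^2+23z^3+7z^4$), so the arithmetic and the logic are all correct. One caveat on the concrete heuristic for Part (1): the paper's working cycle is $k_1 = (ac-bd)T_1 + c^2T_3$, whose class in $\hh_{1,3} \cong (I/\fn I)_3$ corresponds to the combination $A^2C - (ABD - C^3)$ of \emph{two} of the thirteen defining cubics, not to a single generator; the cycle $bdT_1 - c^2T_3$ attached to $ABD - C^3$ alone does not reach the socle class $[acd^2\,T_1T_2T_3T_4]$ (for instance, multiplying it by the paper's $k_3 = (bc+d^2)T_2T_3T_4 - b^2T_1T_2T_4$ gives zero), so restricting attention to individual mixed generators would stall. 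Since you defer to a Macaulay2 search this is not a fatal gap, but the sum-of-generators strategy of \ref{ring_one_gen} (or allowing arbitrary $k$-linear combinations of the bidegree-$(1,3)$ generating cycles) is what the computation would actually need to find a working $l$.
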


To give a feel for how computations in $\hz^R$ are done by hand, and because the computations in this example are not too tedious, we give a proof that does not rely on the \texttt{DGAlgebras} package in Macaulay2. 

\begin{proof}
Using Macaulay2, we check that the reduced Gr\"obner basis of $I$ with respect to the lexicographic order is composed of the elements:
\begin{align*}
A^3, & A^2C, A^2D, AC^2, B^3, B^2C, B^2D, BC^2, BD^2, C^2D, AB^2+CD^2, \\
& ABD-C^3 BCD+D^3, BC^3, AD^3+C^4, D^4, CD^3, C^5.
\end{align*}
 Using this information, one sees that  $c^4$ and $acd^2$ generate the two-dimensional socle of $R$.

Note that the element $k_1 = (ac-bd)T_1 + c^2T_3\in \kz_{1,3}$ is a cycle. 
We will show that its class $[k_1]$ generates $\hh_{4,8}$ and $\hh_{3,7}$. 

\begin{Claim1}
$\hh_{4,8}$ is generated by the following two elements: 
$$
[c^4 T_1T_2T_3T_4],\  [acd^2 T_1T_2T_3T_4]
$$
This can be seen from the fact that $c^4, acd^2$ form a basis for the socle of $R$. 
\end{Claim1}

\begin{Claim2}
 $\hh_{3,7}$ is generated by the following element:
$$
[c^4T_1T_3T_4]
$$
\end{Claim2}
To verify this claim, note first that $c^4T_1T_3T_4$ is a cycle. Then, compute the module of boundaries $\fm^3 \dd(K_4)$  as being generated by  the classes of the following elements: 
$$ c^4 T_1T_2T_3, c^4 T_1T_2T_4, acd^2 T_1T_2T_4, acd^2 T_1T_3T_4, c^4 T_2T_3T_4, acd^2 T_2T_3T_4, acd^2 T_1T_2T_3 - c^4 T_1T_3T_4$$
and check that $[c^4T_1T_3T_4]\notin \fm^3 \dd(K_4)$. Since $\dim \hh_{3,7}=1$, this proves the claim.

Consider now the elements $k_2 = c^2T_1T_2T_4$ and $k_3 = (bc + d^2)T_2T_3T_4 - b^2T_1T_2T_4$, and $k_4=c^2T_1T_4$ which are also cycles. To verify that $[k_1]$ generates $\hh_{4,8}$ and $\hh_{3,7}$, we note the following relations:
\begin{align*}
k_1k_2&=c^4 T_1T_2T_3T_4\\
k_1k_3&=acd^2 T_1T_2T_3T_4\\
k_1k_4 &= -c^4T_1T_3T_4
\end{align*}

We conclude that the ring $R$ satisfies the graded version of condition $\mathcal Z_{2,2,4}$, as stated in Section \ref{graded}. 

Corollary \ref{formula}, together with a usage of Macaulay2 for computing the Poincar\'e series of $R/\fm^4$ over the polynomial ring, gives: 
$$
P^R_k(z)=\frac{(1+z)^4}{1-z(15z+30z^2+23z^3+7z^4)+2z^2(1+z)^4}=\frac{(1+z)^3}{1-z-12z^2-10z^3-z^4+2z^5}.
$$
\end{proof}
\end{chunk}


\end{document}